\theoremstyle{plain}
\newtheorem{theorem}{Theorem}[section]
\newtheorem{maintheorem}{Theorem}
\newtheorem{lemma}[theorem]{Lemma}
\newtheorem{proposition}[theorem]{Proposition}
\theoremstyle{remark}
\newtheorem{remark}[theorem]{Remark}
\numberwithin{equation}{section}
\newcommand{\NN}{{\mathbb{N}}}
\newcommand{\ZZ}{{\mathbb{Z}}}
\newcommand{\RR}{{\mathbb{R}}}
\newcommand{\EU}{{\mathbb{S}}}
\newcommand{\In}{{\text{In}}}
\newcommand{\Out}{{\text{Out}}}
\newcommand{\Fix}{{\text{Fix}}}
\newcommand{\loc}{{\text{loc}}}
\newcommand{\dpt}{\displaystyle}
\newcommand{\zg}[1]{\langle\gamma_{#1}\rangle}
\begin{document}

\title[The role of the saddle-foci on the structure of a Bykov attracting set]{The role of the saddle-foci on the structure of a Bykov attracting set}
\author[M. Bessa]{M\'ario Bessa}
\address{M\'ario Bessa \\ Departamento de Matem\'atica da Univ. da Beira Interior\\ Rua Marqu\^es d'\'Avila e Bolama \\ 6201-001 Covilh\~a\\ Portugal}
\email{bessa@ubi.pt}

\author[M. Carvalho]{Maria Carvalho}
\address{Maria Carvalho \\ Centro de Matem\'atica da Univ. do Porto \\ Rua do Campo Alegre, 687 \\ 4169-007 Porto \\ Portugal}
\email{mpcarval@fc.up.pt}

\author[A. Rodrigues]{Alexandre A. P. Rodrigues}
\address{Alexandre Rodrigues \\ Centro de Matem\'atica da Univ. do Porto \\ Rua do Campo Alegre, 687 \\ 4169-007 Porto \\ Portugal}
\email{alexandre.rodrigues@fc.up.pt}
\thanks{The authors are grateful for the careful reading and the helpful comments and suggestions that much improve this manuscript.}
\date{\today}


\subjclass[2010]{34C28, 34C29, 34C37, 37D05, 37G35}

\keywords{Heteroclinic cycle; Bykov network; Chain-accessible; Chain-recurrent; Symmetry.}

\maketitle

\begin{abstract}
We consider a one-parameter family $(f_\lambda)_{\lambda \, \geqslant \, 0}$ of symmetric vector fields on the three-dimensional sphere $\EU^3\subset\RR^{4}$ whose flows exhibit a heteroclinic network between two saddle-foci inside a global attracting set. More precisely, when $\lambda = 0$, there is an attracting heteroclinic cycle between the two equilibria which is made of two $1$-dimensional connections together with a $2$-dimensional sphere which is both the stable manifold of one saddle-focus and the unstable manifold of the other. After slightly increasing the parameter while keeping the $1$-dimensional connections unaltered, the two-dimensional invariant manifolds of the equilibria become transversal, and thereby create homoclinic and heteroclinic tangles. It is known that these newborn structures are the source of a countable union of topological horseshoes, which prompt the coexistence of infinitely many sinks and saddle-type invariant sets for many values of $\lambda$. We show that, for every small enough positive parameter $\lambda$, the stable and unstable manifolds of the equilibria and those infinitely many horseshoes are contained in the global attracting set of $f_\lambda$. Moreover, we prove that the horseshoes belong to the heteroclinic class of the equilibria. In addition, we verify that the set of chain-accessible points from either of the saddle-foci is chain-stable and contains the closure of the invariant manifolds of the two equilibria.
\end{abstract}

\setcounter{tocdepth}{1}

\section{Introduction}\label{intro}

This work starts with an autonomous $(\ZZ_2\times\ZZ_2)-$equivariant vector field $f_0$ on the sphere $\EU^3 \subset \RR^4$ exhibiting an attracting heteroclinic network between two saddle-foci $\sigma_1$ and $\sigma_2$ which
belong to a flow-invariant submanifold whose existence is a consequence of the symmetry. This network consists of a two-dimensional sphere connecting $\sigma_2$ and $\sigma_1$ and two one-dimensional connections from $\sigma_1$ to $\sigma_2$. See Figure~\ref{Vegter1} (left). Afterwards, $f_0$ is perturbed generating a smooth one-parameter family $(f_\lambda)_{\lambda \, \geqslant \, 0}$ of vector fields subject to several conditions we will summarize in Section~\ref{setting}. Although small, the perturbation breaks part of the symmetry. More precisely, when $\lambda > 0$, the one-dimensional connections persist while the two-dimensional invariant manifolds, which coincided in the beginning, now intersect transversely. Yet, the closeness of $f_\lambda$ when $\lambda >0$ to the symmetric context somehow simplifies our study due to constrains it induces on the geometry of the invariant manifolds of the equilibria, allowing some control of their relative positions. The aim of this work was to understand how the configuration of these invariant manifolds enables hyperbolic saddle-type behavior to coexist with infinitely many sinks and chaotic attractors. This might explain, though partially, the origins of the chaos that computer simulations of these kind of systems very often disclose, and the role of the stable and unstable manifolds of both the equilibria in the bifurcation phenomena that are known to occur when the parameter $\lambda$ is changed.

\begin{figure}[h]
\begin{center}
\includegraphics[height=7cm]{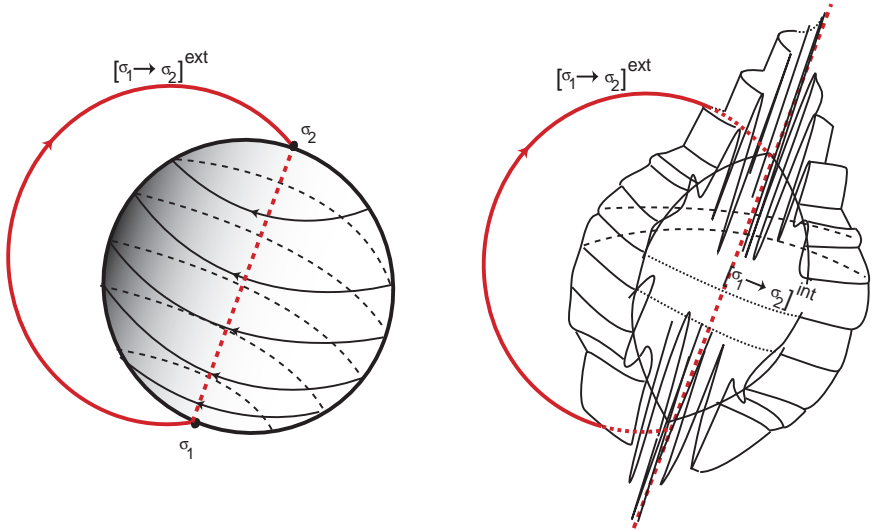}
\end{center}
\caption{\small The invariant manifolds of the saddle-foci for  $\lambda=0$ (left) and $\lambda > 0$ small (right). The latter image was inspired by Figure 4 of \cite{BrVg75}.}
\label{Vegter1}
\end{figure}

We briefly recall in Section~\ref{overview} the intervention, in the bifurcation process that $(f_\lambda)_{\lambda \, \geqslant \, 0}$ is undergoing, of both the symmetry assumptions and the complex nature of eigenvalues of the equilibria, as well as of the transversal intersection (when $\lambda > 0$) of the two-dimensional invariant manifolds of the equilibria. They are responsible for the appearance of infinitely many topological horseshoes and a profusion of homoclinic tangencies, resulting in the outbreak of Newhouse phenomena and the genesis of huge sets of sinks. See Figure~\ref{Vegter1} (right).
This pattern agrees with the known models of quasi-stochastic attractors, found analytically or evinced through computer simulations (cf. \cite{Afraimovich83}). Up to our knowledge, the two-dimensional invariant manifolds associated to the heteroclinic phenomena have been considered on previous references only locally, near an equilibrium or inside a tubular neighborhood of the cycle. In this work we focus on the behavior of the entire two-dimensional manifolds of the equilibria as organizers of the dynamics. However, in order to be able to compute the dynamical data, we have to restrict our analysis to a model of such a family of vector fields, whose properties will be specified in Section~\ref{modelo}.

The paper is organized as follows. After introducing the model of the one-parameter family of vector fields we work with, and discussing its main properties, 
we relate the countable union of topological horseshoes that are known to appear in this setting with the heteroclinic class of the two equilibria, and elucidate their relevance in the bifurcation portrait. Noticing that the global attracting set of $f_\lambda$ may be non transitive due to the presence of proper attractors, we describe in Section~\ref{Proof of Theorem C} a dynamically relevant chain-stable set. For the reader's convenience, we have compiled at the end of the paper a list of definitions in a short glossary.

\section{The setting}\label{setting}

Let $\left(f_\lambda\right)_{\lambda \in \mathbb{R}}$ be a one-parameter family of vector fields in $\mathfrak{X}^r(\EU^3)$, $r \geqslant 3$, whose flows are given by the solutions $\left(\varphi_\lambda(t,x)\right)_{t\,\in\,\mathbb{R}, \,x \,\in \,\EU^3}$ of the differential equations $\dot{x}= f_\lambda(x)$. We will enumerate the main assumptions concerning symmetry, chirality, the complex eigenvalues at the saddle-foci and the intersection of the two bi-dimensional invariant manifolds of these equilibria. We refer the reader to the Appendix~\ref{Definitions} for precise definitions.
\bigskip
\subsection{Main assumptions}\label{sse:main-hyp}
We will request that the organizing center at $\lambda=0$ satisfies the following assumptions:
\bigskip
\begin{enumerate}
\item[(\textbf{P1})] The vector field $f_0$ is equivariant under the action of $ \ZZ_2 \oplus \ZZ_2$ on $\EU^3$ induced by the linear maps $\gamma_1(x_1,x_2,x_3,x_4)=(-x_1,-x_2,x_3,x_4)$ and $\gamma_2(x_1,x_2,x_3,x_4)=(x_1,x_2,-x_3,x_4)$ on $\RR^4$.
\bigskip

\item[(\textbf{P2})] The set $\Fix(\ZZ_2 \oplus \ZZ_2)=\left\{Q \in \EU^3:\gamma_1(Q)=\gamma_2(Q)= Q \right\}$ reduces to the equilibria $\sigma_1=(0,0,0,1)$ and $\sigma_2=(0,0,0,-1)$, which are hyperbolic saddle-foci and whose eigenvalues are respectively $-C_{1} \pm \alpha_{1}i$ and $E_{1}$, where $\alpha_{1} \neq 0$ and $C_{1}>E_{1}>0$, and $E_{2} \pm \alpha_{2}i$ and $-C_{2}$, where $\alpha_{2} \neq 0$ and $C_{2}>E_{2}>0$. We will assume that $\alpha_1=\alpha_2=1$.
\bigskip

\item[(\textbf{P3})] The flow-invariant circle $\Fix(\ZZ_2 \zg{1})=\left\{Q \in \EU^3:\gamma_1(Q) = Q\right\}$ consists of $\sigma_1$ and $\sigma_2$, and two heteroclinic trajectories $[\sigma_1 \rightarrow \sigma_2]^{ext}$ and $[\sigma_1 \rightarrow \sigma_2]^{int}$ from $\sigma_1$ to $\sigma_2$ whose union will be denoted by $[\sigma_1 \rightarrow \sigma_2]$.
\bigskip

\item[(\textbf{P4})] The $f_0-$invariant sphere $\Fix(\ZZ_2 \zg{2})=\left\{Q \in \EU^3:\gamma_2(Q) = Q \right\}$ is made of $\sigma_1$ and $\sigma_2$ and a two-dimensional heteroclinic connection from $\sigma_2$ to $\sigma_1$.
\end{enumerate}
\bigskip

\begin{enumerate}
\item[(\textbf{P5})] The saddle-foci $\sigma_1$ and $\sigma_2$ have the same chirality (cf. Appendix~\ref{chirality_def}).
\end{enumerate}
\bigskip

The sphere $\Fix(\ZZ_2 \zg{2})$ divides $\EU^3$ into two connected components (inner and outer) and $[\sigma_1 \rightarrow \sigma_2]\setminus\{\sigma_1, \sigma_2\}$ has also two connected components, one on each connected component of $ \EU^3\setminus \Fix(\ZZ_2 \zg{2})$. The set $[\sigma_1 \rightarrow \sigma_2]^{int}$ (respectively, $[\sigma_1 \rightarrow \sigma_2]^{out}$) is precisely the connection lying in the inner (respectively, outer) component of
$\EU^3\setminus \Fix(\ZZ_2 \zg{2})$. The two equilibria, the two trajectories referred to in (P3) and the two-dimensional heteroclinic connection from $\sigma_2$ to $\sigma_1$ mentioned in (P4) build a \emph{heteroclinic network} we will denote hereafter by $\Sigma_0$. It is illustrated in Figure~\ref{Vegter1} (left).

Denote by $\mathfrak{X}^r_{\text{Byk}}(\EU^3)\subset \mathfrak{X}^r(\EU^3)$ the set, endowed with the $C^r$--Whitney topology, of $\zg{1}$--equivariant $C^r$ vector fields on $\EU^3$, $r \geqslant 3$, satisfying (P1)--(P5). Consider $f_0 \in \mathfrak{X}^r_{\text{Byk}}(\EU^3)$ embedded in a {continuous} one-parameter family of vector fields
$$ \lambda \in \mathbb{R} \,\,\mapsto \,\, f_\lambda\equiv f(x, \lambda):\EU^3 \rightarrow T\EU^3 \,\in \,\mathfrak{X}^r_{\text{Byk}}(\EU^3)$$
breaking the $\zg{2}$--equivariance at $\lambda = 0$ {in a generic way}. The next three assumptions are the properties we will also need to be valid for the family $\left(f_\lambda\right)_{\lambda \in \RR}$.
\medbreak

\begin{enumerate}
\item[(\textbf{P6})] For every $\lambda\in \RR$, the map $f_\lambda$ is a $\zg{1}$--equivariant $C^r$ vector field,
$r \geqslant 3$.
\end{enumerate}
\medbreak

\begin{enumerate}
\item[(\textbf{P7})] For $\lambda \neq 0$, the two-dimensional manifolds $W^u_\lambda(\sigma_2)$ and $W^s_\lambda(\sigma_1)$ intersect transversely (abbreviated to $W^u_\lambda(\sigma_2)\cap W^s_\lambda(\sigma_1)$) at two trajectories $\beta_1$ and $\beta_2$, called \emph{primary links} or \emph{$0$--pulses}.
\end{enumerate}
\medbreak

\begin{enumerate}
\item[(\textbf{P8})] For every $\lambda \geqslant 0$, the transitions along the connections $[\sigma_1 \rightarrow \sigma_2]$ and $[\sigma_2 \rightarrow \sigma_1]$ are given, in local coordinates, by the Identity map and, up to high order terms in the variable $y$, the transformation $$(x,y)\mapsto (x,y+\lambda \sin(x))$$ respectively. This assumption will be detailed in \S\ref{transitions} and \S\ref{modelo}.
\end{enumerate}
\medbreak

A few comments are in order. Since the equilibria lie on $\Fix(\ZZ_2 \zg{1})$ and are hyperbolic, they persist for small $\lambda>0$, and still satisfy the properties (P2) and (P3).
Besides, $\sigma_1$ and $\sigma_2$, the primary links and the connections $[\sigma_1 \rightarrow \sigma_2]^{ext}$ and $[\sigma_1 \rightarrow \sigma_2]^{int}$ form a heteroclinic network that we denote by $\Sigma^\star_\lambda$. In what follows, $\Gamma $ will stand for a \emph{Bykov cycle} in $\Sigma^\star_\lambda$ with the connections mentioned in (P3) and (P7). According to \cite{LR}, the existence of the  network $\Sigma^\star_\lambda$  implies the presence of a bigger network $\Sigma_\lambda$ (not yet entirely understood) containing infinitely many copies of a Bykov cycle. More precisely, beyond the original primary links $\beta_1$ and $\beta_2$, there exist infinitely many subsidiary heteroclinic connections (transversal or not) turning around the original Bykov cycle. Since the complete description of $\Sigma_\lambda$ is, at the moment, unreachable, we concentrate our attention on $\Sigma^\star_\lambda$.

The set of vector fields $f_0 \in \mathfrak{X}^r_{\text{Byk}}(\EU^3)$, $r \geqslant 3$, which satisfy the conditions (P1)--(P5) contains a non-empty open subset of families $\left(f_\lambda\right)_{\lambda\in \RR}$ for which the assumptions (P6)--(P8) hold (cf. \cite{RodLab}). In particular, there are such families of polynomial vector fields.

\section{Overview}\label{overview}

Let $\left(f_\lambda\right)_{\lambda \geqslant 0}$ be a one-parameter family of vector fields in $\mathfrak{X}_{Byk}^3(\EU^3)$ satisfying (P1)--(P8). When $\lambda=0$, the heteroclinic connections in the network $\Sigma_0$ are contained in fixed point subspaces satisfying the hypothesis (H1) of \cite{KM1}. Due to the inequality $C_1 C_2 >E_1 E_2$, the stability criterion in \cite{KM1} may be applied to $\Sigma_0$, and so there exists a three-dimensional normally hyperbolic invariant manifold where every solution eventually enters. Therefore, we may find an open neighborhood $\mathcal{U}^0$ of the heteroclinic network $\Sigma_0$ (whose closure is the set $V^0$) having its boundary transverse to the flow of the vector field $f_0$ and such that every solution starting in $\mathcal{U}^0$ remains in it for all positive time and is forward asymptotic to $\Sigma_0$. The global attracting set of $f_0$, namely
$$\mathcal{A}_0 =[\sigma_1 \rightarrow \sigma_2] \cup W^u_0(\sigma_2)=[\sigma_1 \rightarrow \sigma_2] \cup W^s_0(\sigma_1)$$
is made of $\sigma_1, \sigma_2$ and the two connections $[\sigma_1 \rightarrow \sigma_2]^{ext}$ and $[\sigma_1 \rightarrow \sigma_2]^{int}$, together with a sphere which is both the stable manifold of $\sigma_1$ and the unstable manifold of $\sigma_2$ (see the left part of Figure \ref{Vegter1}).

As the transverse property is robust, for small enough $\lambda >0$,
the neighborhood $V^0$ is also positively invariant by $f_\lambda$ (that is, every solution of the vector field eventually enters this region and never leaves it in the future) and the flow of $f_\lambda$ still has a three-dimensional normally hyperbolic invariant manifold transverse to the flow, inwardly oriented and containing the network $\Sigma_\lambda$. Due to the transversality of the flow to the boundary of some compact neighborhood $V^0$ of $\mathcal{A}_0$ such that $\varphi_\lambda(t, V^0)$ is contained in the interior of $V^0$ for all $t \geqslant 0$, when $\lambda>0$ is small enough the flow of $f_\lambda$ still has a global attracting set, namely
\begin{equation}\label{A_lambda}
\mathcal{A}_\lambda = \bigcap_{t \geqslant 0} \,\,\varphi\left(t, V^0\right).
\end{equation}

We observe that, as the initial attracting set $\mathcal{A}_0$ is asymptotically stable and contains all the invariant manifolds of the equilibria, there exists $\lambda_0>0$ such that, for every $0\leqslant \lambda\leqslant \lambda_0$, those invariant manifolds associated to $f_\lambda$ are still in the interior of $V^0$. Using their flow invariance, we conclude that:
\begin{equation}
\label{union_final}
\overline{W^s_\lambda(\sigma_1)\cup W^u_\lambda(\sigma_1)\cup W^s_\lambda(\sigma_2)\cup W^u_\lambda(\sigma_2)}\subset \mathcal{A}_\lambda.
\end{equation}

In what follows, $\mathcal{A}_\lambda$ will be called \emph{a Bykov attracting set}, which may have infinitely many connected components.
\medbreak
In \cite{ACL05, Bykov00, LR, LR2015}, it was proved that there exists an open subset $\mathcal{C}$ of $\mathfrak{X}_{Byk}^3(\EU^3)$ containing $f_0$ and such that the following properties hold for every family of vector fields $\left(f_\lambda\right)_\lambda$ subject to the conditions (P1)--(P8):

\bigskip

\noindent \textbf{1}. For $n \in \NN$ and $\lambda > 0$, any tubular neighborhood of a Bykov cycle in $\Sigma^\star_\lambda$ contains infinitely many $n$--pulse (cf. Appendix~\ref{pulse}) heteroclinic connections $[\sigma_2\to\sigma_1]$.
\bigskip

\noindent \textbf{2}. For any tubular neighborhood $\mathcal{T}$ of $\Sigma_\lambda^\star$ and every cross-section to the flow $S_q\subset \mathcal{T}$ at a point  $q$ in $[\sigma_2\rightarrow\sigma_1]$, the first return map to $S_q$ has a countable family of compact invariant sets $\left(\Lambda_{\lambda,m}\right)_{m \,\in \,\mathbb{N}}$ in each of which the dynamics is conjugate to a full shift over a finite number of symbols. The union $\Lambda_\lambda$ of these sets accumulates on $\Sigma^\star_\lambda$ while the number of symbols coding the first return map to $S_q$ tends to infinity.  Moreover, for every $m \in \mathbb{N}$, the topological horseshoe $\Lambda_{\lambda,m}$ is uniformly hyperbolic \cite{ACL05}.
\bigskip

\noindent \textbf{3}. At any cross-section $S_q\subset \mathcal{T}$, the set of initial conditions in $S_q$ that do not leave $\mathcal{T}$ by the flow, for all future and past times, is precisely $\Lambda_\lambda$. In particular, any tubular neighborhood $\mathcal{T}$ of a Bykov cycle $\Gamma$ inside $\Sigma^\star_\lambda$ contains points not lying in $\Gamma$ whose trajectories remain in $\mathcal{T}$ for all time.
\bigskip

\noindent \textbf{4}. There is a strictly decreasing sequence $\left(\lambda_i\right)_{i \,\in\, \mathbb{N}}$ of positive real numbers converging to $0$ and such that, for any $\lambda>\lambda_i$, there are two 1--pulse heteroclinic connections for the flow of $\dot{x}=f_\lambda(x)$ that collapse into a 1--pulse heteroclinic tangency at $\lambda=\lambda_i$, and then disappear for $\lambda<\lambda_i$.
\bigskip

\noindent \textbf{5}. For each $i \in \mathbb{N}$, there is a sequence of parameter values $\left(\lambda_{ij}\right)_{j \,\in \,\mathbb{N}}$ accumulating at $\lambda_i$ for which the corresponding vector field has a $k$--pulse heteroclinic tangency for every $k \in \mathbb{N}$.
\bigskip

\noindent \textbf{6}. As $\lambda$ decreases to zero, some of the saturated horseshoes associated to these heteroclinic tangencies disappear, while we witness the emergence of Newhouse phenomena and several $k$--pulses heteroclinic connections are destroyed through saddle-node type bifurcations.

\bigskip

In order to improve the readability of the paper, we have gathered in the following table the most important symbols used in the text.
\medskip
\small
\begin{table}[htb]
\begin{center}
\begin{tabular}{|c|l|} \hline
Notation  & \hspace{2.5cm} Meaning   \\
\hline
& \\
 $\Sigma_0$ & Initial attracting heteroclinic network \\
 &\\
  \hline
  &\\
 $\Sigma_\lambda^\star$ & Network with four Bykov cycles  \\
 &\\  \hline
 &\\
  $\Sigma_\lambda$ & Bigger network with infinitely many pulses   \\
  &\\  \hline
  & \\
  $\mathcal{A}_\lambda$&  Global attracting set   \\
    & \\ \hline
  & \\
  ${\Lambda}_\lambda$& Union of countable many horseshoes   \\
    & \\

   \hline
\end{tabular}
\end{center}
\bigskip
\caption{Symbols used throughout the paper.}
\end{table}
\normalsize

\bigbreak
The transition from a simple steady state at $\lambda = 0$ to chaotic attractors when $\lambda > 0$ reminds of vector fields exhibiting instant chaos, as the ones presented in \cite{GW}. When $\lambda$ changes, we witness the creation or destruction of invariant pieces of the non-wandering set, as well as the loss of the hyperbolic properties at some of them. We expect to see the family of flows $(f_\lambda)_\lambda$ undergoing saddle-node bifurcations inducing the destruction of horseshoes \cite{Crov2002}, cascades of period-doubling \cite{YA} and the unfolding of homoclinic tangencies \cite{PT}. Besides, some of these bifurcations may happen simultaneously. Yet, the complete bifurcation diagram is at the moment out of reach.

\section{Main results}\label{main results}

Let $\left(f_\lambda\right)_{\lambda\in \RR}$ be a one-parameter family of vector fields in $\mathfrak{X}_{Byk}^3(\EU^3)$ satisfying the conditions (P1)--(P8). For every small enough $\lambda >0$, there exists a saddle-type set $\Lambda_\lambda$, invariant by the first-return map $\mathcal{P}_\lambda$ defined at an adequate cross-section $S_\lambda$, which is a countable union of horseshoes. Our first result, whose proof will be presented in Section~\ref{Proof of Theorem A}, states that the saturation $\widetilde{\Lambda_\lambda}$ by the flow is dynamically linked to the equilibria (the operator tilde is defined in Appendix~\ref{operador_tilde}).
In what follows, the solution by the vector field of a fixed point in $\Lambda_\lambda$ under the first return map to the cross section $S_q$  is called $1$-periodic orbit of $\widetilde{\Lambda_\lambda}$.

\begin{maintheorem}\label{thm:A}
Let $\left(f_\lambda\right)_\lambda$ be a one-parameter family of vector fields in $\mathfrak{X}_{Byk}^3(\EU^3)$ satisfying (P1)--(P8). Consider a tubular neighborhood $\mathcal{T}$ of the Bykov network $\Sigma^\star_\lambda$.
Then there exists $\lambda_0>0$ such that, for every $0 < \lambda \leqslant \lambda_0$, one has:
\medbreak
\begin{enumerate}
\medskip
\item  $\overline{\widetilde{\Lambda_\lambda}} \subset \overline{W^u_\lambda(\sigma_2)\cap W^s_\lambda(\sigma_1)}.$

\medskip
\item $\overline{W^s_\lambda(\widetilde{\Lambda_\lambda})}\subset\overline{W^s_\lambda(\sigma_1)}\,\,$ and $\,\,\overline{W^u_\lambda(\widetilde{\Lambda_\lambda})}\subset \overline{W^u_\lambda(\sigma_2)}.$
\medskip
\item The Lyapunov exponents at the $1$-periodic orbits of $\widetilde{\Lambda_\lambda}$ are uniformly bounded away from zero.

\end{enumerate}
\end{maintheorem}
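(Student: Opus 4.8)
The plan is to treat the three assertions separately but with a common mechanism: the control that assumptions (P5), (P7), (P8) give on the geometry of the invariant manifolds near a Bykov cycle, combined with the $\lambda$-closeness to the symmetric organizing center. For item (1), I would start from the description of the horseshoes $\Lambda_{\lambda,m}$ in items \textbf{2}--\textbf{3} of the Overview: each $\Lambda_{\lambda,m}$ is a hyperbolic maximal invariant set for the first-return map $\mathcal{P}_\lambda$ on a cross-section $S_q$ at a point $q\in[\sigma_2\to\sigma_1]$, obtained as an intersection of ``horizontal'' and ``vertical'' strips that shadow the Bykov cycle. The key point is that the horizontal strips are contained in (and accumulate on) $W^u_\lambda(\sigma_2)\cap S_q$ while the vertical strips lie in $W^s_\lambda(\sigma_1)\cap S_q$; this is exactly the content of the transversal-intersection assumption (P7) propagated by the local coordinates of (P8). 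Hence every point of $\Lambda_\lambda\cap S_q$ is a limit of transverse intersection points of $W^u_\lambda(\sigma_2)$ with $W^s_\lambda(\sigma_1)$, and saturating by the flow (which preserves both invariant manifolds) and taking closures gives $\overline{\widetilde{\Lambda_\lambda}}\subset\overline{W^u_\lambda(\sigma_2)\cap W^s_\lambda(\sigma_1)}$.

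For item (2) I would use the standard inclination/$\lambda$-lemma together with the same strip picture. Since $\Lambda_{\lambda,m}$ is uniformly hyperbolic, its stable set inside $S_q$ is a lamination by curves transverse to the unstable strips; pushing these curves backward and applying the $\lambda$-lemma at $\sigma_1$ shows that $W^s_\lambda(\widetilde{\Lambda_\lambda})$ accumulates on $W^s_\lambda(\sigma_1)$, because the local stable manifold of $\sigma_1$ is the limit of the stable leaves of the horseshoes as they spiral in along $[\sigma_2\to\sigma_1]$. Dually, $W^u_\lambda(\widetilde{\Lambda_\lambda})$ accumulates on $W^u_\lambda(\sigma_2)$. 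Taking closures yields both inclusions. The point that needs care here is that the accumulation is genuinely onto the whole of $\overline{W^s_\lambda(\sigma_1)}$ and not merely a local piece; this is where one exploits that the $1$-dimensional connection $[\sigma_1\to\sigma_2]$ is unbroken, so that the global return geometry is still organized by the cycle $\Gamma$.

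For item (3), the $1$-periodic orbits of $\widetilde{\Lambda_\lambda}$ are the flow-saturations of fixed points of $\mathcal{P}_\lambda$ lying in the horseshoes. Their Lyapunov exponents are governed by the eigenvalues of $D\mathcal{P}_\lambda$ at those fixed points, and $\mathcal{P}_\lambda$ is a composition of the two local passage maps near $\sigma_1$ and $\sigma_2$ (Shilnikov-type maps, whose derivatives blow up or contract at definite exponential rates fixed by $C_1,E_1,C_2,E_2,\alpha_1=\alpha_2=1$ from (P2)) with the two transition maps of (P8), which are $C^1$-bounded uniformly in $\lambda$ on the relevant domain. One then gets a uniform upper bound on the contracting eigenvalue and a uniform lower bound ($>1$) on the expanding eigenvalue of $D\mathcal{P}_\lambda$ at any such fixed point, the bounds depending only on $C_i,E_i$ and not on $m$ or on the particular periodic point; dividing by the (uniformly bounded) return time gives Lyapunov exponents bounded away from zero uniformly. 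The main obstacle is the uniformity in $m$: a priori the fixed points sit in horseshoes $\Lambda_{\lambda,m}$ that accumulate on $\Sigma^\star_\lambda$, so their return times and the local coordinates used grow without bound, and one has to check that the ratio (log of the derivative)/(return time) does not degenerate — this is where the explicit form of the Shilnikov maps and the condition $C_i>E_i>0$ are essential, since they force the dominant multiplier to grow at least like a fixed positive power of the small coordinate while the return time grows only logarithmically.
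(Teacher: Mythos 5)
Your treatment of items (1) and (3) is essentially the paper's: for (1), the nested horizontal/vertical strips of shrinking diameter each contain a transverse intersection point of $W^u_{\lambda,\loc}(\sigma_2)$ with $W^s_{\lambda,\loc}(\sigma_1)$ (though note these manifolds meet the cross-section in curves, so the strips are not ``contained in'' $W^u_\lambda(\sigma_2)\cap S_q$ — what the argument needs, and what holds, is that the manifolds are respectively a vertical and a horizontal fiber of each strip); for (3), the paper computes the trace and determinant of $D\mathcal{P}_\lambda$ at the fixed points $p_\ell$ explicitly and divides by the return time $t_1(p_\ell)=-K\log y_\ell=2\pi\ell$, obtaining $\chi_u(p_\ell)\to 1/K$ and $\chi_s(p_\ell)\to-\delta/K$. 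Be aware that your sketch first asserts the return time is ``uniformly bounded'' and only afterwards concedes it grows without bound; only the second statement is correct, and the exponents stay away from zero precisely because $\log|\mu_u(p_\ell)|$ and $\log|\mu_s(p_\ell)|$ grow at the same logarithmic rate in $1/y_\ell$ as the return time does.

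The genuine gap is in item (2): your argument establishes the reverse of the claimed inclusion. The inclination ($\lambda$-)lemma applied to backward iterates of the stable leaves of the horseshoe shows that those leaves accumulate on $W^s_{\lambda,\loc}(\sigma_1)$, i.e.\ it yields $\overline{W^s_\lambda(\sigma_1)}\subset\overline{W^s_\lambda(\widetilde{\Lambda_\lambda})}$ — and indeed you write that ``the local stable manifold of $\sigma_1$ is the limit of the stable leaves of the horseshoes'' and worry about the accumulation covering ``the whole of $\overline{W^s_\lambda(\sigma_1)}$''. The theorem asserts the opposite containment, $\overline{W^s_\lambda(\widetilde{\Lambda_\lambda})}\subset\overline{W^s_\lambda(\sigma_1)}$: every point of the stable set of the horseshoe must be approximated by points of $W^s_\lambda(\sigma_1)$. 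What delivers this is not the $\lambda$-lemma but, once more, the strip geometry: $\Lambda_\lambda$ lies in nested vertical (resp.\ horizontal) strips whose widths (resp.\ heights) tend to zero and whose vertical boundaries lie in $W^u_\lambda(\sigma_2)$ (resp.\ which contain $W^s_{\lambda,\loc}(\sigma_1)$ as a horizontal fiber), so each unstable (resp.\ stable) leaf of $\Lambda_\lambda$ is a Hausdorff limit of arcs of $W^u_\lambda(\sigma_2)$ (resp.\ of $W^s_\lambda(\sigma_1)$); one then saturates by the flow, using continuity of $\mathcal{P}_\lambda$ and of compact parts of the invariant manifolds. As written, your step for (2) would fail — it proves a different (in fact complementary) statement.
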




Without much information about the global attracting set $\mathcal{A}_\lambda$, we decided to look for chain-accessible and chain-recurrent points. In spite of exploring approximate solutions instead of true trajectories, this strategy has the advantage of reasonably explaining some difficult aspects of the phase portrait of $f_\lambda$. Let $V^0$ be as in Section~\ref{overview} and denote by $\mathcal{B}_\lambda$ the set
\begin{equation}
\mathcal{B}_\lambda = \{P \in V^0: P\, \text{ is chain-accessible from } \,\sigma_2\}.
\end{equation}
When $\lambda=0$, one has $\mathcal{A}_0=\mathcal{B}_0$. For $\lambda > 0$, there may exist either attracting periodic orbits or other more complex proper attractors, whose presence in the phase space of $f_\lambda$ is hard to confirm, but the dynamical role of the set $\mathcal{B}_\lambda$ within the global attracting set is also worthwhile to unravel due to the following properties (to be proved in Section~\ref{Proof of Theorem C}).

\begin{maintheorem}\label{thm:C}
Under the hypotheses of Theorem \ref{thm:A}, there exists $\lambda_1 > 0$ such that, for every $0 < \lambda \leqslant \lambda_1$, the set $\mathcal{B}_\lambda$ is compact, forward invariant and chain-stable. Moreover, the union $\overline{W^u_\lambda (\sigma_2)}\,\cup\,\overline{W^s_\lambda (\sigma_1)}$ is contained in $ \mathcal{A}_\lambda \cap  \mathcal{B}_\lambda$.
\end{maintheorem}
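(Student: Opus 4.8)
The plan is to prove Theorem~\ref{thm:C} in three stages, handling the set $\mathcal{B}_\lambda$ first and the manifold inclusion afterwards. First I would establish that $\mathcal{B}_\lambda$ is nonempty (it contains $\sigma_2$ trivially, since the trivial $\varepsilon$-chain works) and compact. Compactness reduces to closedness inside the compact set $V^0$: if $P_n \to P$ with each $P_n$ chain-accessible from $\sigma_2$, then for a prescribed $\varepsilon$ one first builds an $\varepsilon/2$-chain from $\sigma_2$ to $P_n$ for $n$ large enough that $d(P_n,P) < \varepsilon/2$, and then appends the single jump $P_n \rightsquigarrow P$; one must check the time-length constraint in the definition of $\varepsilon$-chain (each flow segment has length at least $1$, say) is preserved, which it is because we only add a final jump. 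So $P \in \mathcal{B}_\lambda$.

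Second, forward invariance: if $P \in \mathcal{B}_\lambda$ and $t > 0$, I want $\varphi_\lambda(t,P) \in \mathcal{B}_\lambda$. Take an $\varepsilon$-chain from $\sigma_2$ to $P$; extend its last flow segment by an additional time $t$ (if the last segment already has length $\geqslant 1$, lengthening it keeps it admissible), landing at $\varphi_\lambda(t,P)$ with no extra error introduced. The only subtlety is that $\varphi_\lambda(t,P)$ must still lie in $V^0$, which holds because $V^0$ is positively invariant for $f_\lambda$ when $\lambda$ is small (this is exactly the property recalled in Section~\ref{overview}, giving the existence of $\mathcal{A}_\lambda$). Chain-stability then follows from the standard fact that the set of points chain-accessible from a given point is always chain-stable: any $\varepsilon$-chain starting near $\mathcal{B}_\lambda$ can be prepended with a chain from $\sigma_2$ into $\mathcal{B}_\lambda$, so it cannot escape a neighborhood of $\mathcal{B}_\lambda$ without the union chain exiting — this is a soft topological argument using only the definition of chain-stable set from the glossary.

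Third, and this is where the real content lies, I must show $\overline{W^u_\lambda(\sigma_2)} \cup \overline{W^s_\lambda(\sigma_1)} \subset \mathcal{A}_\lambda \cap \mathcal{B}_\lambda$. The inclusion in $\mathcal{A}_\lambda$ is already recorded in~\eqref{union_final}, so the task is the inclusion in $\mathcal{B}_\lambda$. For $W^u_\lambda(\sigma_2)$: a point $P$ on the unstable manifold has a backward orbit converging to $\sigma_2$, so for any $\varepsilon$ one jumps from $\sigma_2$ to a point $\varphi_\lambda(-T,P)$ on that backward orbit with $d(\sigma_2, \varphi_\lambda(-T,P)) < \varepsilon$ (for $T$ large), then flows forward time $T$ to reach $P$ exactly — a $1$-jump $\varepsilon$-chain. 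Passing to the closure: if $P \in \overline{W^u_\lambda(\sigma_2)}$, approximate $P$ by $Q \in W^u_\lambda(\sigma_2)$ with $d(Q,P) < \varepsilon$, chain from $\sigma_2$ to $Q$ as above, then append $Q \rightsquigarrow P$. For $\overline{W^s_\lambda(\sigma_1)}$: here I would use Theorem~\ref{thm:A}(2) together with the transversality (P7) and the heteroclinic structure. A cleaner route: the connection $[\sigma_2 \to \sigma_1]^{int}$ (part of the two-dimensional heteroclinic data) shows $\sigma_1$ itself is chain-accessible from $\sigma_2$, and then any point on $W^s_\lambda(\sigma_1)$ is reached by a chain that flows toward $\sigma_1$ and, near $\sigma_1$, jumps backward along $W^s_\lambda(\sigma_1)$ to the target — i.e. one reverses the unstable-manifold argument using forward convergence to $\sigma_1$. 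Concretely, for $P \in W^s_\lambda(\sigma_1)$, flow $P$ forward until it is $\varepsilon$-close to $\sigma_1$; this is a chain from $P$ to $\sigma_1$, not what we want, so instead one must exploit that points of $W^s_\lambda(\sigma_1)$ accumulate on the horseshoes $\widetilde{\Lambda_\lambda}$ whose unstable manifolds sit in $\overline{W^u_\lambda(\sigma_2)}$ by Theorem~\ref{thm:A}(2), hence are chain-accessible from $\sigma_2$; then chain-stability and the fact that $W^s_\lambda(\sigma_1) \subset \overline{W^s_\lambda(\widetilde{\Lambda_\lambda})}$-type density (from property~\textbf{3} of Section~\ref{overview}) closes the argument.

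The main obstacle I anticipate is precisely the $W^s_\lambda(\sigma_1)$ inclusion: unlike the unstable manifold, chain-accessibility \emph{to} the target must be built \emph{forward} in time, so one cannot simply run the orbit; one genuinely needs to route approximate trajectories through the heteroclinic tangle — leaving a neighborhood of $\sigma_1$ along $W^u_\lambda(\sigma_1)$, following the $1$-dimensional connection $[\sigma_1 \to \sigma_2]$ back to $\sigma_2$, and using the transverse intersection $W^u_\lambda(\sigma_2) \pitchfork W^s_\lambda(\sigma_1)$ to land $\varepsilon$-close to $W^s_\lambda(\sigma_1)$ again, then shadow it into $\sigma_1$ — the care being in the uniform control of these transition times and of how close the intersection points are to $\sigma_1$, which is where assumptions (P5) (equal chirality) and (P8) (the explicit transition maps) enter to guarantee the geometry behaves as needed for small $\lambda$. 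I would isolate this into a lemma stating that $\sigma_1$ and every point of $\overline{W^s_\lambda(\sigma_1)}$ is chain-accessible from $\sigma_2$, proved by the routing just described, and then assemble Theorem~\ref{thm:C} from it together with the soft topological facts above.
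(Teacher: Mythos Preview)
Your handling of closedness, forward invariance, chain-stability, and the inclusion $\overline{W^u_\lambda(\sigma_2)}\subset\mathcal{B}_\lambda$ matches the paper essentially step for step; the paper's chain-stability argument is slightly more explicit (it exhibits a nested family of open sets $\mathcal{U}_j$ of points $(\varepsilon_j,\tau)$-accessible from $\sigma_2$ and shows $\mathcal{B}_\lambda=\bigcap_j\mathcal{U}_j$), but the content is the same.

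The divergence is entirely at the inclusion $\overline{W^s_\lambda(\sigma_1)}\subset\mathcal{B}_\lambda$, and here you work far harder than the paper and in a direction that does not close. The paper's proof of Theorem~\ref{thm:C} nowhere invokes Theorem~\ref{thm:A}, the horseshoes $\widetilde{\Lambda_\lambda}$, property~\textbf{3} of Section~\ref{overview}, or the chirality and transition assumptions (P5), (P8); it is a purely soft chain-recurrence argument. The device you are missing is this: at the very outset the paper introduces $\mathcal{C}_\lambda=\{P\in V^0: P\text{ is chain-accessible from }\sigma_1\}$ and proves $\mathcal{B}_\lambda=\mathcal{C}_\lambda$, using only that $\sigma_2$ is chain-accessible from $\sigma_1$ via $[\sigma_1\to\sigma_2]$ and that $\sigma_1$ is chain-accessible from $\sigma_2$ via a $0$-pulse. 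With this equality in hand the paper dispatches the stable-manifold inclusion in a single sentence, $W^s_\lambda(\sigma_1)\subset\mathcal{C}_\lambda=\mathcal{B}_\lambda$, and then passes to closures. Switching the basepoint from $\sigma_2$ to $\sigma_1$ is precisely what makes the ``reversed'' unstable-manifold argument available.

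Your proposed horseshoe route, by contrast, has two concrete problems. First, Theorem~\ref{thm:A}(2) gives $\overline{W^s_\lambda(\widetilde{\Lambda_\lambda})}\subset\overline{W^s_\lambda(\sigma_1)}$, the reverse of what you would need, and property~\textbf{3} of Section~\ref{overview} only characterises $\Lambda_\lambda$ as the maximal invariant set in the tubular neighbourhood; neither supplies density of anything in $W^s_\lambda(\sigma_1)$. Second, the routing in your final paragraph (leave along $W^u_\lambda(\sigma_1)$, follow $[\sigma_1\to\sigma_2]$, use the transverse intersection, then shadow $W^s_\lambda(\sigma_1)$ into $\sigma_1$) manufactures a chain ending near $\sigma_1$, not near the prescribed target $P\in W^s_\lambda(\sigma_1)$; so the lemma you isolate is stated correctly but the argument sketched for it proves only the special case $P=\sigma_1$. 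The paper's change of basepoint via $\mathcal{B}_\lambda=\mathcal{C}_\lambda$ is exactly the move that sidesteps this difficulty without any appeal to the hyperbolic structure.
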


\section{Transition maps}\label{se:local-coordinates}

In this section we will analyze the dynamics near the Bykov cycles of $\Sigma^\star_\lambda$ through local maps, after selecting appropriate linearizing coordinates in cylindrical neighborhoods of the saddle-foci $\sigma_1$ and $\sigma_2$ (see Figure~\ref{conservativo3}), as done in \cite{LR}. In those cylindrical coordinates $(\rho ,\theta ,z)$ the linearization of the dynamics at $\sigma_1 $ and $\sigma_2$ is specified by the following equations:
\begin{equation}\label{local map v}
\left\{\begin{array}{l}
\dot{\rho}=-C_1 \rho \\
\dot{\theta}= 1 \\
\dot{z}=E_1 z
\end{array}
\right.
\qquad\text{and}\qquad
\left\{\begin{array}{l}
\dot{\rho}=E_2 \rho \\
\dot{\theta}= 1 \\
\dot{z}=-C_2 z .
\end{array}
\right.
\end{equation}

\begin{figure}[h]
\begin{center}
\includegraphics[height=8.5cm]{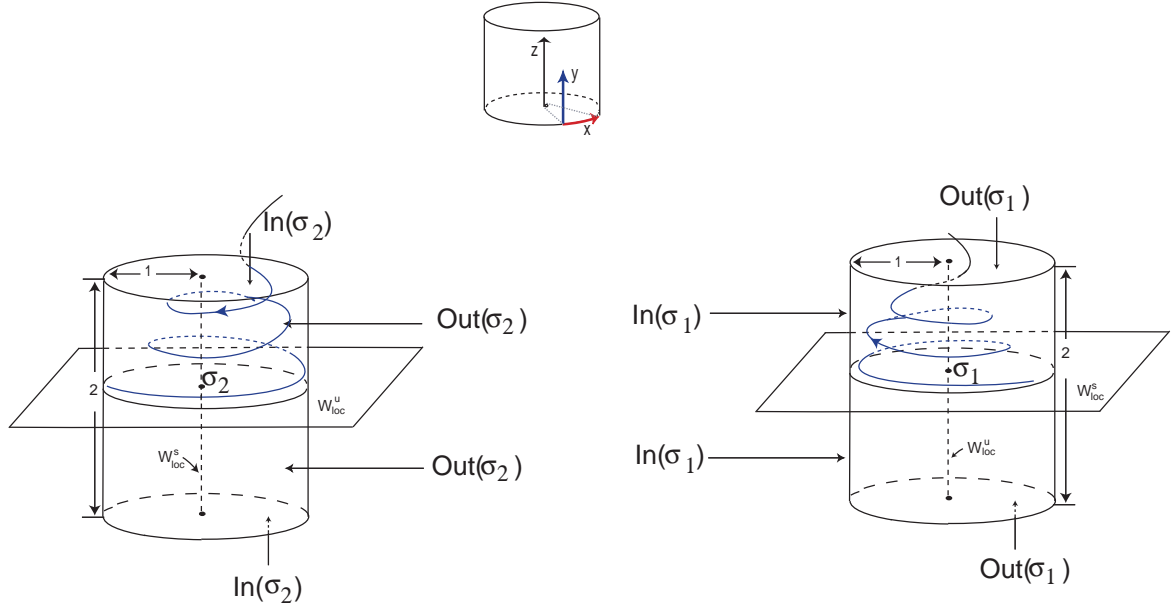}
\end{center}
\caption{Local cylindrical coordinates in $V_1$ and $V_2$.}
\label{conservativo3}
\end{figure}

Assume that those cylindrical neighborhoods of $\sigma_1 $ and $\sigma_2 $, which we call $V_1$ and $V_2$,  have base-radius $\varepsilon>0$ and height $2\varepsilon$. After a linear rescaling of the variables, we may suppose that $\varepsilon=1$. The boundaries of $V_1$ and $V_2$ have three components: the cylinder wall, parameterized in cylindrical coordinates $(r, \varphi, z) \equiv (1,x,y)$ by $x\in \,[0,2\pi[$ and $|y| \leqslant 1$; and two discs, the top and bottom of each cylinder, with polar parameterizations $(r,\varphi,\pm 1)$, where $0 \leqslant r \leqslant 1$ and $\varphi \in \,[0, 2\pi[$.

In $V_1$, we will use the following notation: $\In(\sigma_1)$ is the subset of the cylinder wall of $V_1$ consisting of points that enter $V_1$ in positive time; $\Out(\sigma_1)$ is the subset of the top and bottom of $V_1$ made of the points that leave $V_1$ in positive time; $\In^+(\sigma_1)$ is the upper part of the cylinder, parameterized by $(x,y)$ with $x\in [0, \, 2\pi)$ and $y\,\in\,(0,1)$; dually, $\In^-(\sigma_1)$ is its lower part, parameterized by $(x,y)$ with $x\in [0, \, 2\pi)$ and $y\,\in\,(-1,0)$. Similarly, we define the cross-sections for the linearization inside the neighborhood $V_2$ of $\sigma_2$. Observe that the flow is transverse to these cross-sections.


The local stable manifold of $\sigma_1$, say $W^s_{\lambda,\loc}(\sigma_1)$, corresponds precisely to the disk in $V_1$ parameterized by $z=0$. The local unstable manifold $W^u_{\lambda,\loc}(\sigma_1)$ of $\sigma_1$ is the $z$-axis in $V_1$, intersecting the top and bottom disks of this cylinder at their centers. A similar description of the local invariant manifolds of $\sigma_2$ is valid in $V_2$. 

\subsection{Local maps near the saddle-foci}\label{se:local saddle-foci}
Integrating \eqref{local map v}, we deduce that the trajectory of a point $(x,y) \in \In^+(\sigma_1)$ leaves $V_1$ through $\Out^+(\sigma_1)$ at the point
\begin{equation}\label{local_v}
\Phi^+_{1}(x,y)=\left(y^{\delta_1},\,\,-\frac{\log y}{E_1}+x \right)=(r,\varphi)
\end{equation}
where $\delta_1 =\frac{C_{1 }}{E_{1}} > 1$. Similarly, a point $(r,\varphi) \in \In^+(\sigma_2) \setminus W^s_{\lambda,\loc}(\sigma_2)$ leaves $V_2$ by $\Out^+(\sigma_2)$ at
\begin{equation}\label{local_w}
\Phi^+_{2}(r,\varphi )=\left(-\frac{\log r}{E_2}+\varphi,\,\,r^{\delta_2 }\right)=(x,y)
\end{equation}
where $\delta_2=\frac{C_{2 }}{E_{2}} >1$. Analogous expressions define the local maps $\Phi^-_{1}: \In^-(\sigma_1)\rightarrow \Out^-(\sigma_1)$ and $\Phi^-_{2}: \In^-(\sigma_2)\rightarrow \Out^-(\sigma_2)$. Finally, the map
$$\Phi_1\,: \In^+(\sigma_1) \cup \In^-(\sigma_1) \rightarrow \Out(\sigma_1)$$
is just given by $\Phi_1^+$ in $\In^+(\sigma_1)$ and $\Phi_1^-$ in $\In^-(\sigma_1).$ Similarly, we gather in $\Phi_2$ both maps $\Phi_2^+$ and $\Phi_2^-$

\subsection{The transitions}\label{transitions}
The coordinates on $V_1$ and $V_2$ are chosen so that $[\sigma_1\rightarrow\sigma_2]$ connects points with $z>0$ (resp. $z<0$) in $V_1$ to points with $z>0$  (resp. $z<0$) in $V_2$. Points in $\Out(\sigma_1) \setminus W^u_{\lambda,\loc}(\sigma_1)$
are mapped into $\In(\sigma_2)$ along a flow-box around each of the connections $[\sigma_1\rightarrow\sigma_2]$. We will assume that the transition
$$\Psi_{1 \rightarrow 2}^\pm\,\colon \quad \Out(\sigma_1) \quad \rightarrow \quad \In(\sigma_2)$$
does not depend on $\lambda$ and is the Identity map, a choice compatible with hypothesis (P5) due to the fact that the nodes $\sigma_1$ and $\sigma_2$ have the same chirality. Denote by $\eta^+$ the first hit map
$$\eta^+=\Phi_{2}^+ \circ \Psi_{1 \rightarrow 2}^+ \circ \Phi_{1 }^+\colon \quad \In^+(\sigma_1) \quad \rightarrow \quad \Out^+(\sigma_2).$$
From \eqref{local_v} and \eqref{local_w} we infer that, in local coordinates, for $y>0$ we have
\begin{equation}\label{eqeta}
\eta^+(x,y)=\left(x-K \log y \,\,\,(mod\,2\pi), \,y^{\delta} \right) + o(y^\delta)
\end{equation}
with
\begin{equation}\label{delta e K}
\delta=\delta_1 \delta_2>1 \qquad \text{and} \qquad  K= \frac{C_1+E_2}{E_1 E_2} > 0.
\end{equation}
Analogously, we define the transition $\eta^- = \Phi_{2}^- \circ \Psi_{1 \rightarrow 2}^- \circ \Phi_{1 }^-$ in $\In^-(\sigma_1)$. The global first hit map is precisely
$$\eta \,\colon \In^+(\sigma_1)\cup \In^-(\sigma_1) \rightarrow \Out(\sigma_2)$$
given by $\eta =\eta^+$ in $\In^+(\sigma_1)$ and $\eta=\eta^-$ in $\In^-(\sigma_1)$. When $\lambda\neq 0$, we have another transition map, namely
$$\Psi_{2 \rightarrow 1}^\lambda \,\colon \quad \Out(\sigma_2)\rightarrow \In(\sigma_1)$$
which depends on the parameter $\lambda$. Due to the mentioned higher order terms and the assumption (P7), its expression is only approximately known, and this is why we will have to resort to a model (cf. Section \ref{modelo}). To build such a model, let us examine the images by $\Psi_{2 \rightarrow 1}^\lambda$ of the two-dimensional invariant manifolds of $\sigma_1$ and $\sigma_2$ when $\lambda\neq 0$.
\bigbreak

Denote by $P_1=(p_1,0)$ and $P_2=(p_2,0)$, with $0 \leqslant p_1 < p_2 \leqslant 2\pi$, the two points where the connections $[\sigma_2 \rightarrow \sigma_1]$ intersect $\Out(\sigma_2)$, and by $Q_1=(q_1,0)$ and $Q_2=(q_2,0)$, with $0 \leqslant q_2 < q_1 \leqslant 2\pi$, the two points where $[\sigma_2 \rightarrow \sigma_1]$ meets $\In(\sigma_1)$ (seen Figure~\ref{elipse}). Notice that, for each $j=1,2$, the points $P_j$ and $Q_j$ are on the same trajectory. As the manifolds $W^u_0(\sigma_2)$ and $W^s_0(\sigma_1)$ coincide when $\lambda=0$, we may assume that, for small $\lambda>0$, the manifold $W^s_{\lambda,\loc}(\sigma_1)$ (respectively $W^u_{\lambda,\loc}(\sigma_2)$) intersects the wall $\Out(\sigma_2)$ of the cylinder $V_2$ (respectively the wall $\In(\sigma_1)$ of the cylinder $V_1$) on a closed curve. This is precisely the expected unfolding from the coincidence when $\lambda=0$ of the manifolds $W^s_0(\sigma_1)$ and $W^u_0(\sigma_2)$, as established in \cite{Chillingworth}.

We will assume that, for $\lambda > 0$ the intersection $W^u_\lambda(\sigma_2) \cap \In(\sigma_1)$ is sinusoidal in shape. Nevertheless, any smooth $2\pi-$periodic function  would work  (cf. Section 7 of \cite{Champneys2009}). So, for small $\lambda>0$, these curves are such that $W^s_{\lambda,\loc}(\sigma_1)\cap \Out(\sigma_2)$ is the graph of a map $y=\xi_s(x,\lambda)$, with $\xi_s(p_j,\lambda)=0$ for $j=1,2$; $W^u_{\lambda,\loc}(\sigma_2)\cap \In(\sigma_1)$ is the graph of $y=\xi_u(x,\lambda)$, with $\xi_u(q_j,\lambda)=0$ for $j=1,2$; the maximum value of $\xi_u(x,\lambda)$ is attained at some point $(x_1(\lambda),y_1(\lambda))$ with $q_2 < x_1(\lambda) < q_1$; $\xi_s(x,0)\equiv 0 \equiv \xi_u(x,0)$; $\xi_s^\prime(p_1,\lambda)>0$, $\xi_s^\prime(p_2,\lambda)<0$, $\xi_u^\prime(q_1,\lambda)<0$ and $\xi_u^\prime(q_2,\lambda)>0$. Moreover, we have $\xi_u^{\prime\prime}(x)< 0$ in $]\, q_2,q_1\,[$. Check this technical details in Figure~\ref{elipse}.

\begin{figure}[h]
\begin{center}
\includegraphics[height=11cm]{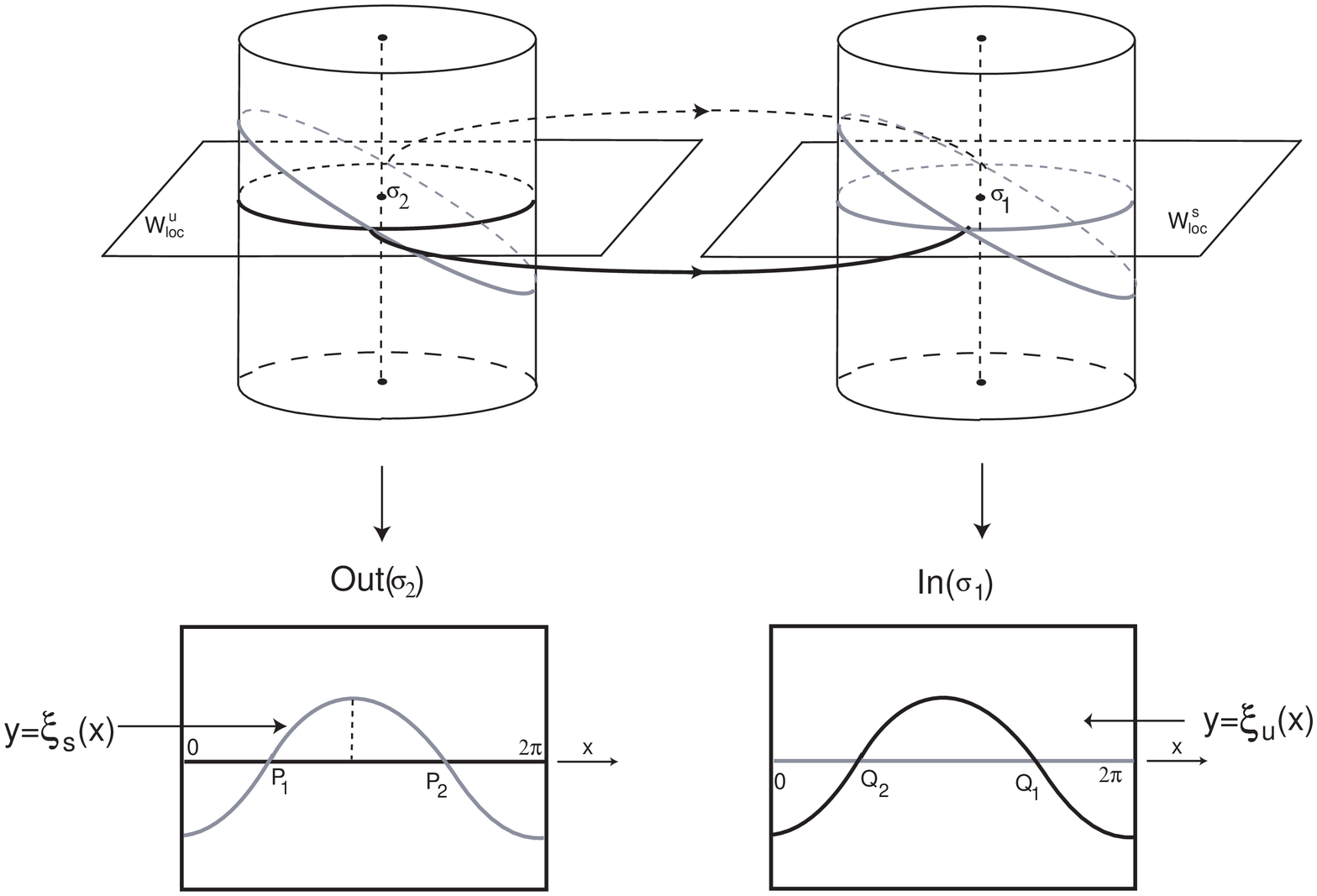}
\end{center}
\caption{\small For $\lambda \gtrsim 0$, $W^s_\lambda(\sigma_1)$ intersects the wall $\Out(\sigma_2)$ of the cylinder $V_2$ and $W^u_\lambda(\sigma_2)$ intersects the wall $\In(\sigma_1)$ of the cylinder $V_1$ on closed curves given, in local coordinates, by the graph of a $2\pi-$periodic function of sinusoidal type. In this work we assume that this sinusoidal curve is the graph of the sine map.}
\label{elipse}
\end{figure}

\section{A model for the transition}\label{modelo}
In order to deal with an workable expression of the first return map, we have chosen a simple transition map from $\Out(\sigma_2)$ to $\In(\sigma_1)$. This is the main specificity of the model we will consider.

\subsection{The model}
In what follows, we will assume that
\begin{eqnarray}\label{transition21}
\xi_u(x, \lambda) &=& \lambda \sin x 
\end{eqnarray}
and we get
\begin{eqnarray}
\Psi_{2 \rightarrow 1}^\lambda(x,y) &=& \left(x, \,y +\lambda \sin x\right) \nonumber\\
\smallskip
\eta^+(x,y) &=& \left(x-K \log y \,\,\,(mod\,2\pi), \,y^{\delta} \right) \nonumber.
\end{eqnarray}
Let
\begin{equation}\label{first return 1}
\mathcal{P}_\lambda \,:= \,\Psi_{2 \rightarrow 1}^\lambda \circ \eta = \Psi_{2 \rightarrow 1}^\lambda \circ \Phi_{2} \circ \Psi_{1 \rightarrow 2} \circ \Phi_{1} 
\end{equation}
be the first return map to $\In(\sigma_1)$. Its domain $\mathcal{D}$ is the set of initial conditions $(x,y) \in \In(\sigma_1)$ whose trajectory returns to $\In(\sigma_1)$. It follows from \eqref{transition21} that, when $y>0$, the first return map at $(x,y)$, say $\mathcal{P}_\lambda(x,y)= \left(\mathcal{P}_{\lambda,1}(x,y), \,\mathcal{P}_{\lambda,2}(x,y)\right)$, is given by
\begin{equation}\label{first return 2}
\mathcal{P}_\lambda(x,y) = \left(x-K\log y \,\,\, (mod\,2\pi), \,y^\delta + \lambda \sin(x-K\log y)\right).
\end{equation}
Besides,
$$D \Psi_{2 \rightarrow 1}^\lambda(x,y)=\left(\begin{array}{cc} 1 & 0\\
\frac{\partial \xi_u}{\partial x} & 1 \end{array}\right) = \left(\begin{array}{cc} 1 & 0\\
\lambda \cos(x) & 1 \end{array}\right).$$
Therefore, as
$$D\eta(x,y)=\left(\begin{array}{cc} 1&\dpt\frac{-K}{y}\\ \\ 0&\delta y^{\delta-1}\end{array}\right)
$$
we get
$$\det D \mathcal{P}_\lambda(x,y)= \det D \Psi_{2 \rightarrow 1}^\lambda\left(\eta(x,y)\right) \times \det D\eta(x,y)=\delta y^{\delta-1}.$$
Thus, for sufficiently small $y>0$, the first return map $\mathcal{P}_\lambda$ is contracting. After appropriate adaptations, we obtain similar expressions for $y<0$.

\subsection{First return time}\label{sse:first-return-time}
One of the advantages of considering the previous model is that we will be able to explicitly compute the time of flight around the heteroclinic cycle of a periodic solution that turns once around $\Sigma_\lambda$. A trajectory whose initial condition is $(1,\theta,z) \in \In^+(\sigma_1)$ arrives at $\Out^+(\sigma_1)$ after a period of time equal to
\begin{equation}\label{time1}
-\,\frac{\log z}{E_1}.
\end{equation}
Consequently, given $(x,y)\in \mathcal{D} \subset \In^+(\sigma_1)$, if $t_1(x,y) \in \RR^+$ denotes the time needed for its first return to $\In(\sigma_1)$, then
\begin{equation}\label{Time1}
t_1(x,y) = \left|\frac{1}{E_1}\log y \right| + \left|\frac{1}{E_2}\frac{C_1}{E_1} \log y\right| = \left| \frac{E_2+C_1}{E_1E_2}\log y \right|= - K \log y.
\end{equation}
Analogous formulas for $(x,y)\in  \In^-(\sigma_1)$.

\section{Proof of Theorem~\ref{thm:A}}\label{Proof of Theorem A}

Recall that we have fixed a tubular neighborhood $\mathcal{T}$ of $\Sigma_\lambda^\star$ where we may find a countable union of topological horseshoes $\Big(\Lambda_{\lambda, m}\Big)_{m\, \in\,  \NN}$, denoted by $\Lambda_\lambda$, whose saturation $\widetilde{\Lambda_\lambda}$ by the flow does not leave $\mathcal{T}$ (cf. \cite{ACL05, Wiggins}).  For each $m\in \NN$, the set $\Lambda_{\lambda, m}$ is contained in a rectangle  $[-\tau, \tau]\times[a_m, b_m]$ inside $\In(\sigma_1)$, for sequences $(a_m)_{m\, \in\, \NN}$  and  $(b_m)_{m\, \in\, \NN}$  explicitly given in \cite{Rodrigues3}. 
Having fixed $\lambda>0$, in this section we focus our attention on the dynamics inside two rectangles centered at the two connections $\beta_1$ and $\beta_2$, whose existence is guaranteed by (P7). 
\subsection{Linking $\widetilde{\Lambda_\lambda}$ with $W^s_\lambda(\sigma_1)$ and $W^u_\lambda(\sigma_2)$}
\label{ss:heteroclinic class}

We start showing that $\widetilde{\Lambda_{\lambda}}$ is contained in the closure of the intersections of the two-dimensional manifolds $W^u_\lambda(\sigma_2)$ and $W^s_\lambda(\sigma_1)$, the so called heteroclinic class of $\sigma_1$ and $\sigma_2$. Let $p \in{\Lambda_\lambda}$.

From the construction of $\Lambda_\lambda$ (cf.  \cite[\S 2.3]{Wiggins}), there exist a sequence $(s_n)_{n \in \ZZ}$ of integers,
a sequence of horizontal strips $(H_{s_0 s_{1}\ldots s_{n}})_{n \,\in \,\NN}$ and a sequence of vertical strips $(V_{s_0 s_{1}\ldots s_{n}})_{n \,\in \,\NN}$ such that the diameter of the sequence of intersections $\left(V_{s_0 s_{-1}\ldots s_{-n}}\cap H_{s_0 s_{1}\ldots s_{n}}\right)_{n \,\in \,\NN}$ goes to zero and, for every $n \in \NN_0$,
$$p \in V_{s_0 s_{-1}\ldots s_{-n}} \,\cap \,H_{s_0 s_{1}\ldots s_{n}}.$$
Therefore, given $\varepsilon>0$,
there is $\ell \in \NN$ large enough such that the diameter of the intersection $V_{s_0 s_{-1}\ldots s_{-\ell}}\cap H_{s_0 s_{1}\ldots s_{\ell}}$ is smaller than $\varepsilon$. Moreover, $\ell$ may be chosen sufficiently large so that, in $V_{s_0 s_{-1}\ldots s_{-\ell}}$, the invariant sets $W^u_{\lambda,\loc}(\sigma_2)$ and $W^s_{\lambda,\loc}(\sigma_1)$ are vertical and horizontal fibers, respectively, which intersect transversely. Thus, $p \in \overline{W^u_\lambda(\sigma_2)\cap W^s_\lambda(\sigma_1)}$. Therefore  $\widetilde{\Lambda_{\lambda}} \subset \overline{W^u_\lambda(\sigma_2)\cap W^s_\lambda(\sigma_1)}$.

\subsection{Linking $\sigma_1$ and $\sigma_2$ with $\widetilde{\Lambda_\lambda}$}\label{Proof of Theorem A2}

Firstly, let us verify the inclusion $\overline{W^u_\lambda(\widetilde{\Lambda_\lambda})}\subset \overline{W^u_\lambda(\sigma_2)}$. Let $P$ be an accumulation point of ${W^u_\lambda(\widetilde{\Lambda_\lambda})}$. Again, by the construction of $\Lambda_{\lambda}$,
there is a sequence $(V_n)_n$ of vertical rectangles whose vertical boundaries are contained in $W^u_\lambda(\sigma_2)$, their widths approaches $0$ as $n$ goes to $+ \infty$ and $\Lambda_{\lambda} \subset V_n$ for every $n \in \NN$. By the continuity, in the adequate topologies, of the return map $\mathcal{P}_\lambda$ and of the compact parts of the invariant manifolds involved, for $n$ sufficiently large the vertical boundary of $V_n$ is arbitrarily close to $W^u_\lambda(\widetilde{\Lambda_\lambda})$. Thus, $P$ is an accumulation point of $W^u_\lambda(\sigma_2)$.


\subsection{The fixed points of $\mathcal{P}_\lambda$}\label{sse:fixedpoints}
Recall from \eqref{first return 2} that the first return map in $\mathcal{D} \cap \In^+(\sigma_1)$ of the model is given by
$$\mathcal{P}_\lambda(x,y)=\left( x-K \log y \pmod{2\pi},\,\, y^\delta+\lambda\sin(x-K\log(y))\right)$$
with $\delta > 1$, $K>0$ and $\mathcal{D}$ as in \eqref{first return 1}. So, its fixed points in $\mathcal{D}\cap \In^+(\sigma_1)$, say $p_\ell=(x_\ell, y_\ell)\in \In^+(\sigma_1)$ with $\ell \in \NN$, are solutions of the system of equations
\begin{equation*}
\left\{
\begin{array}{l}
x-K \log (y)=x+2\ell \pi\\
\smallskip \\
y^\delta +\lambda \sin(x-K\log(y))=y.
\end{array}
\right.
\end{equation*}
Thus,
$$y_\ell =  \exp \left(\frac{-2\ell \pi}{K}\right) \qquad \text{and} \qquad \sin(x_\ell)=\frac{y_\ell-y_\ell^\delta}{\lambda}=\frac{\exp \left(\frac{-2\ell \pi}{K}\right)-\exp \left(\frac{-2\ell\delta \pi}{K}\right)}{\lambda},$$
provided that $\left|\frac{y_\ell\,-\,y_\ell^\delta}{\lambda}\right|\leqslant 1$ or, equivalently, while it is true that
\begin{equation}\label{lambda maximo}
\lambda \geqslant \exp \left(\frac{-2\ell \pi}{K}\right) - \exp \left(\frac{-2\ell \delta \pi}{K}\right).
\end{equation}
Consequently, as soon as $\lambda$ becomes smaller that $\exp \left(\frac{-2\ell \pi}{K}\right) - \exp \left(\frac{-2\ell \delta \pi}{K}\right)$, the fixed point $p_\ell$ disappears. As $\Lambda_{\lambda,\ell}$ is contained in the homoclinic class $\overline{W^s_\lambda(p_{\lambda,\ell})\cap W^u_\lambda(p_{\lambda,\ell})}$ of the fixed point $p_{\lambda,\ell}$, when the fixed point $p_{\lambda,\ell}$ disappears this horseshoe no longer exists. 

\begin{lemma}[\cite{ACL05}]\label{prop:non-unif-hyp-Lambda}
Given $\lambda>0$, the derivative $D\mathcal{P}_\lambda$ at  $p_\ell=(x_\ell,y_\ell) \in \mathcal{D}\cap \In^+(\sigma_1)$ has real eigenvalues $\mu_s(p_\ell)$ and $\mu_u(p_\ell)$ and eigenvectors $V_s(p_\ell)$ and $V_u(p_\ell)$ such that:
\begin{enumerate}
\item $\lim_{\ell \rightarrow +\infty} \mu_s(p_\ell)=0^-\quad $ and $\quad \lim_{\ell \rightarrow +\infty} \mu_u(p_\ell)=-\infty$.
\medskip
\item $\lim_{\ell \rightarrow +\infty} \frac{V_s(p_\ell)}{||V_s(p_\ell)||}=\left\langle 1,0 \right\rangle \quad$ and $\quad \lim_{\ell \rightarrow +\infty} \frac{V_u(p_\ell)}{||V_u(p_\ell)||}=\left\langle 1,\lambda  \right\rangle$.
\end{enumerate}
\end{lemma}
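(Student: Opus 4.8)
The plan is to compute $D\mathcal{P}_\lambda$ explicitly at $p_\ell=(x_\ell,y_\ell)$ using formula \eqref{first return 2}, and then to extract the asymptotics by letting $\ell\to+\infty$, which forces $y_\ell=\exp(-2\ell\pi/K)\to 0^+$. First I would differentiate: writing $u=x-K\log y$, one has
\[
D\mathcal{P}_\lambda(x,y)=\begin{pmatrix} 1 & -K/y \\[2pt] \lambda\cos u & \delta y^{\delta-1}-\lambda K y^{-1}\cos u\end{pmatrix}.
\]
At the fixed point $u\equiv x_\ell \pmod{2\pi}$, so $\cos u=\cos x_\ell$, and from \S\ref{sse:fixedpoints} we have $\sin x_\ell=(y_\ell-y_\ell^\delta)/\lambda$, whence $\cos x_\ell=\pm\sqrt{1-(y_\ell-y_\ell^\delta)^2/\lambda^2}$, which tends to $\pm\sqrt{1-0}=\pm 1$ (with a definite sign on the branch under consideration, matching the choice that makes $\Lambda_{\lambda,\ell}$ a saddle). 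The key observation is the scale separation: the entries $-K/y_\ell$ and $-\lambda K y_\ell^{-1}\cos x_\ell$ both blow up like $1/y_\ell$, the entry $\delta y_\ell^{\delta-1}\to 0$ (since $\delta>1$), and the top-left entry stays $=1$. So $D\mathcal{P}_\lambda(p_\ell)$ is, to leading order, a matrix dominated by its second column, of size $O(1/y_\ell)$.

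Next I would compute the trace and determinant. We already know $\det D\mathcal{P}_\lambda(x,y)=\delta y^{\delta-1}$ from Section~\ref{modelo}, so $\mu_s(p_\ell)\mu_u(p_\ell)=\delta\, y_\ell^{\delta-1}\to 0$. The trace is $\operatorname{tr} D\mathcal{P}_\lambda(p_\ell)=1+\delta y_\ell^{\delta-1}-\lambda K y_\ell^{-1}\cos x_\ell$; since $\cos x_\ell\to\pm 1$ and $y_\ell^{-1}\to+\infty$, the trace diverges to $\mp\infty$, and on the relevant branch $\operatorname{tr}\to -\infty$. Because the determinant of $D\mathcal{P}_\lambda$ is a nonzero real number, the two eigenvalues are either both real or complex conjugate; a quick look at the discriminant $(\operatorname{tr})^2-4\det$, which behaves like $(\lambda K y_\ell^{-1}\cos x_\ell)^2\to+\infty$ while $4\det\to 0$, shows the discriminant is eventually positive, so the eigenvalues are real for all large $\ell$ (and by \cite{ACL05} for every $\lambda>0$ at all the relevant $\ell$). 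From $\mu_s\mu_u=\delta y_\ell^{\delta-1}$ and $\mu_s+\mu_u\sim -\lambda K y_\ell^{-1}$, Vieta gives $\mu_u(p_\ell)\sim -\lambda K y_\ell^{-1}\to-\infty$ and $\mu_s(p_\ell)=\dfrac{\delta y_\ell^{\delta-1}}{\mu_u(p_\ell)}\sim -\dfrac{\delta}{\lambda K}\,y_\ell^{\delta}\to 0^-$, which is exactly item (1).

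For item (2), I would solve $(D\mathcal{P}_\lambda(p_\ell)-\mu I)V=0$ for each eigenvalue. For $\mu=\mu_s(p_\ell)\to 0$: the first row reads $V^{(1)}=\dfrac{K}{y_\ell}\,\dfrac{1}{1-\mu_s}V^{(2)}$, hmm — more carefully, $(1-\mu_s)V^{(1)}=\dfrac{K}{y_\ell}V^{(2)}$, so $V^{(2)}=\dfrac{y_\ell(1-\mu_s)}{K}V^{(1)}\to 0\cdot V^{(1)}$; normalizing, $V_s(p_\ell)/\|V_s(p_\ell)\|\to\langle 1,0\rangle$. For $\mu=\mu_u(p_\ell)\sim-\lambda K y_\ell^{-1}$: from the second row $\lambda\cos x_\ell\, V^{(1)}+(\delta y_\ell^{\delta-1}-\lambda K y_\ell^{-1}\cos x_\ell)V^{(2)}=\mu_u V^{(2)}$; substituting $\mu_u\sim -\lambda K y_\ell^{-1}\cos x_\ell$ (on the branch $\cos x_\ell\to 1$ this is consistent) the dominant $y_\ell^{-1}$ terms cancel and one is left with $\lambda\cos x_\ell\, V^{(1)}\approx (\text{lower order})\,V^{(2)}$ — this is the delicate part, so I would instead use the first row $V^{(2)}=\dfrac{(1-\mu_u)y_\ell}{K}V^{(1)}$; since $\mu_u\sim -\lambda K/y_\ell$, we get $V^{(2)}\sim \dfrac{(\lambda K/y_\ell)\,y_\ell}{K}V^{(1)}=\lambda V^{(1)}$, so $V_u(p_\ell)/\|V_u(p_\ell)\|\to \langle 1,\lambda\rangle/\sqrt{1+\lambda^2}$, which up to normalization is the claimed $\langle 1,\lambda\rangle$.

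The main obstacle is bookkeeping the competing orders of magnitude cleanly — in particular, being careful that in the second row of the eigenvector equation for $\mu_u$ the two $O(1/y_\ell)$ terms cancel, so that one must not read off the eigenvector from that row but rather from the top row (or, equivalently, track the next-order term). I would also need to pin down the sign of $\cos x_\ell$ on the branch that produces the horseshoe $\Lambda_{\lambda,\ell}$ (so that $\operatorname{tr}\to-\infty$ rather than $+\infty$, giving the minus signs in item (1)); this is where invoking \cite{ACL05} is convenient, since that reference already selects the saddle branch and guarantees realness of the eigenvalues uniformly in the relevant range of $\ell$.
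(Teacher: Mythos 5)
Your proposal is correct and follows essentially the same route as the paper: compute $D\mathcal{P}_\lambda(p_\ell)$ explicitly, use $\det = \delta y_\ell^{\delta-1}\to 0$ together with $\operatorname{trace}\sim -\lambda K y_\ell^{-1}\cos x_\ell\to-\infty$ (with $\cos x_\ell=\sqrt{1-\sin^2 x_\ell}\to 1$) and Vieta's relations to get item (1), then read the eigenvectors off the first row of $(D\mathcal{P}_\lambda(p_\ell)-\mu I)V=0$, i.e.\ $v_2=\frac{(1-\mu)y_\ell}{K}v_1$, to get item (2). Your observation that the second row is unreliable for the unstable direction (leading-order cancellation) matches the paper's choice of working from the top row, and the branch selection $\cos x_\ell>0$ is exactly the implicit choice the paper makes.
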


The argument to prove this lemma is essentially the same as the proof of Theorem 6 of \cite{ACL05}. We include the explicit expressions for the trace and the determinant for future use.  Having fixed $\lambda>0$, the derivative of $\mathcal{P}_\lambda$ in $\mathcal{D}\cap \In^+(\sigma_1)$ is given by
\begin{equation}\label{eq:derivada}
D\mathcal{P}_\lambda(x,y)=\left(\begin{array}{cc}
1 & -\frac{K}{y}\\
\lambda \cos(x-K\log(y)) \qquad  \qquad   & \delta y^{\delta-1} -\frac{\lambda K}{y}\cos(x-K\log(y))\\
\end{array}
\right).
\end{equation}
Consequently, the determinant of $D\mathcal{P}_\lambda(x,y)$ is positive and equal to $\delta y^{\delta-1}$ and its trace is given by $1+\delta y^{\delta-1} -\frac{\lambda K}{y}\cos(x-K\log(y)).$ At the fixed point $p_\ell=(x_\ell, y_\ell)$ we have $\det D\mathcal{P}_\lambda(p_\ell)> 0$ and
$\lim_{\ell \rightarrow +\infty} \det D\mathcal{P}_\lambda(p_\ell) = 0.$
In addition, the trace of $D\mathcal{P}_\lambda(p_\ell)$ may be rewritten as
\begin{eqnarray*}
 \text{Trace}\, D\mathcal{P}_\lambda(p_\ell)&=& 1 + \delta \exp \left(\frac{-2\pi(\delta-1)\ell}{K}\right)-\lambda K \exp \left(\frac{2\pi\ell}{K}\right)\cos(x_\ell)\\
 &=& 1 + \delta \exp \left(\frac{-2\pi(\delta-1)\ell}{K}\right)-\lambda K \exp \left(\frac{2\pi\ell}{K}\right)\sqrt{1-\sin^2(x_\ell)}=\\
&=& 1 + \delta \exp \left(\frac{-2\pi(\delta-1)\ell}{K}\right)- \\
&-& K\exp \left(\frac{2\pi\ell}{K}\right)\sqrt{\lambda^2-\left[\exp{\left(\frac{-2\ell \pi}{K}\right)} -\exp{\left(\frac{-2\ell \delta \pi}{K}\right)}\right]^2}.
\end{eqnarray*}
Thus $\lim_{\ell \rightarrow +\infty} \text{Trace}\,D\mathcal{P}_\lambda(p_\ell) = -\infty$ if $\lambda > \exp \left(\frac{-2\ell \pi}{K}\right) - \exp \left(\frac{-2\ell \delta \pi}{K}\right).$
\medbreak
Denote by $\det$ and $\text{trace}$ the determinant and the trace of $D\mathcal{P}_\lambda(p_\ell)$, respectively. Given $\lambda>0$ and $\ell \in \NN$ such that
$$\lambda^2-\left[\exp{\left(\frac{-2\ell \pi}{K}\right)} -\exp{\left(\frac{-2\ell \delta \pi}{K}\right)}\right]^2 > 0 \qquad \quad \text{and} \qquad \quad \text{trace}^2 - 4\det \geqslant 0,$$
the eigenvalues of $D\mathcal{P}_\lambda(p_\ell)$ satisfy the inequalities
$$\mu_u=\frac{\text{trace}-\sqrt{\text{trace}^2 - 4\det}}{2} \leqslant \frac{\text{trace}}{2} \quad \quad \text{and} \quad \quad \mu_s = \frac{\text{trace}+\sqrt{\text{trace}^2 - 4\det}}{2}<0.$$
Since $\mu_s \times \mu_u = \det$ and $\mu_s + \mu_u = \text{trace}$, we conclude that $\lim_{\ell \rightarrow +\infty} \mu_s(p_\ell) = 0^-$ and $\lim_{\ell \rightarrow +\infty} \mu_u(p_\ell)= -\infty.$

\section{The evolution of the eigenvalues $\mu_s(p_\ell)$ and $\mu_u(p_\ell)$}
\label{s: eigenvalues}
The values of $\mu_s(p_\ell)$ and $\mu_u(p_\ell)$ change with both $\ell$ and $\lambda$ as depicted in Figures \ref{saddle-node1} and \ref{Lamerey}. The images on the left were obtained using Maple and the expressions of the {trace} and {determinant} of the derivative of the first return map; the image at the top right is a scheme of what we conjecture to be the global bifurcations diagram.

 The analysis of these eigenvalues shows that, for each $\ell \in \NN$, there are parameters $a_\ell < b_\ell < c_\ell < d_\ell$ in $[0,1]$ at which relevant dynamical changes occur. For $\lambda < a_\ell$, there is no fixed point $p_\ell$. At $\lambda=a_\ell$, the fixed point $p_\ell$ is created through a saddle-node bifurcation yielding a sink and a saddle. In the interval $]\,b_\ell, c_\ell\,[$, the eigenvalues of the sink $p_\ell$ are complex, so it is a stable focus. At $\lambda=d_\ell$, the sink $p_\ell$ experiences another bifurcation, becoming a saddle. As expected (cf. \cite{YA}), cascades of saddle-nodes and period-doubling bifurcations happen after the parameter $d_\ell$; and afterwards the stable and unstable manifolds of the fixed point $p_\ell$ unfold a tangency, thus creating the horseshoe $\Lambda_{\lambda,\ell}$ to which $p_\ell$ belongs.

\begin{figure}[h]
\begin{center}
\includegraphics[height=13cm]{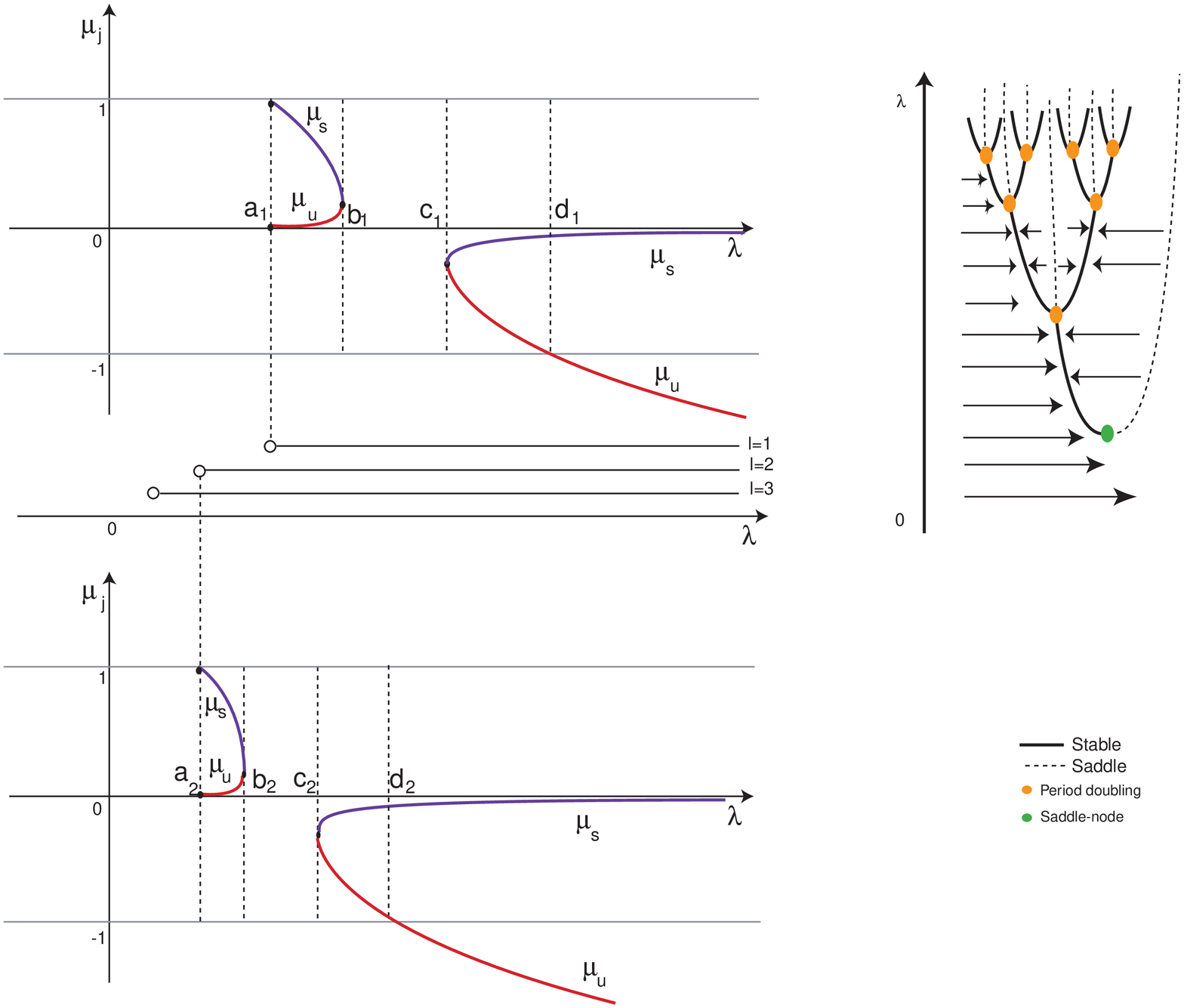}
\end{center}
\caption{\small Left: Graphs of the eigenvalue-maps as functions of $\lambda$ for $\ell = 1,2$, $\delta=4$ and $K=0.2$. Right: A component of the global bifurcation diagram of $(f_\lambda)_\lambda$. }
\label{saddle-node1}
\end{figure}

\begin{figure}[h]
\begin{center}
\includegraphics[height=6.0cm]{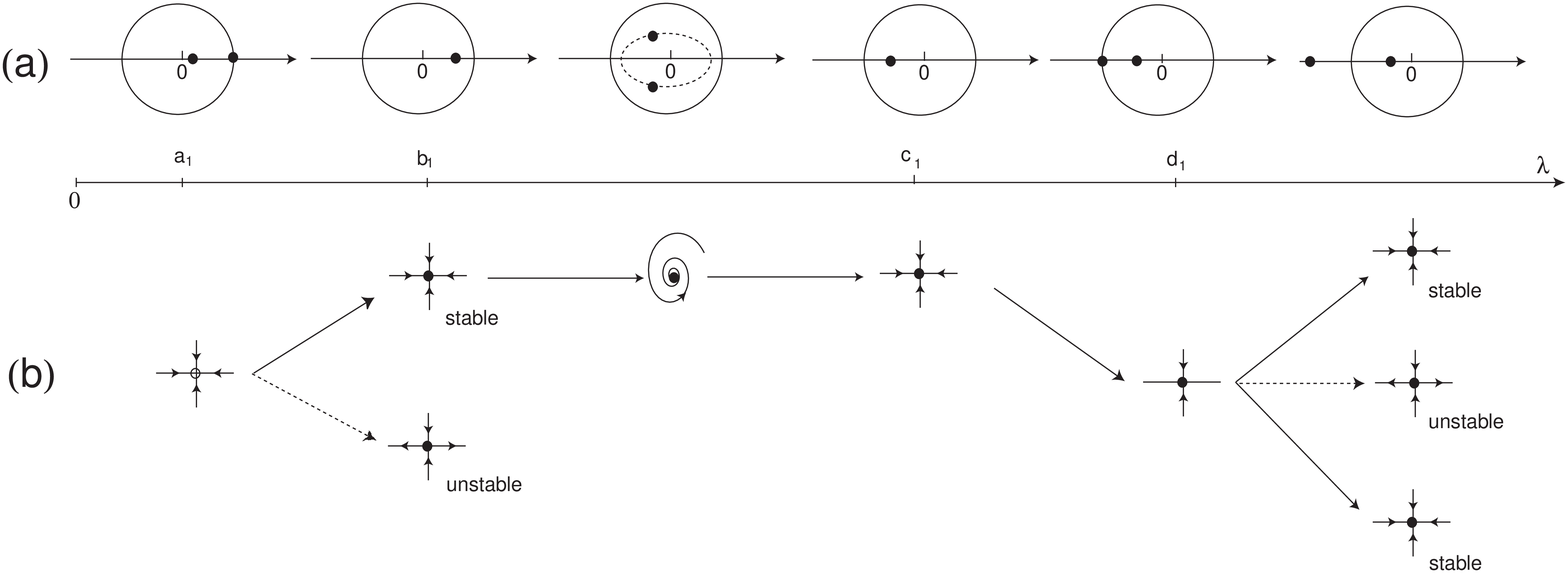}
\end{center}
\caption{\small Lamerey diagram for the parameters of Figure~\ref{saddle-node1} and $\ell=1$. (a) Evolution of the eigenvalues within the unit circle. (b) Corresponding changes in the nature of the fixed point $p_1$. }
\label{Lamerey}
\end{figure}

The stable and unstable eigenspaces of $p_\ell$ are the solutions of the linear equations
\begin{equation*}
D\mathcal{P}_\lambda(p_\ell)
\left(
\begin{array}{l}
v_1^s\\
v_2^s
\end{array}
\right)=
\mu_s(p_\ell)\left(
\begin{array}{l}
v_1^s\\
v_2^s
\end{array}
\right)
\qquad
\text{and}
\qquad
D\mathcal{P}_\lambda(p_\ell)
\left(
\begin{array}{l}
v_1^u\\
v_2^u
\end{array}
\right)=
\mu_u(p_\ell) \left(
\begin{array}{l}
v_1^u\\
v_2^u
\end{array}
\right).
\end{equation*}
Thus, concerning the eigenspace of $\mu_s(p_\ell)$, if we choose $v_1^s=1$ then we get
$$\lim_{\ell \rightarrow +\infty} \,v_2^s(p_\ell)= \lim_{\ell \rightarrow +\infty}  \frac{1-\mu_s(p_\ell)}{K}\,y_\ell =0,$$
which means that the one-dimensional stable direction tends to be horizontal as $y_\ell$ approaches $0$. Similarly, if $v_1^u=1$ then
\begin{eqnarray*}
v_2^u(p_\ell)&=& \frac{1-\mu_u(p_\ell)}{K}\,y_\ell = \frac{1}{K}\,y_\ell - \frac{\text{trace}-\mu_s(p_\ell)}{K}\,y_\ell = \left(\frac{1}{K}+ \frac{\mu_s(p_\ell)}{K}\right)\,y_\ell - \frac{\text{trace}}{K}\,y_\ell =\\
&=& \left(\frac{1}{K}+ \frac{\mu_s(p_\ell)}{K}\right)\,y_\ell- \frac{1}{K} \left[1+\delta y_\ell^{\delta-1} - \frac{\lambda K}{y_\ell}\cos(x_\ell-K\log(y_\ell))\right]\,y_\ell=\\
&=&\left(\frac{1}{K}+ \frac{\mu_s(p_\ell)}{K}\right)\,y_\ell - \left(\frac{1+\delta y_\ell^{\delta-1}}{K}\right)\,y_\ell +\lambda \,\sqrt{1-\left(\frac{y_\ell-y_\ell^\delta}{\lambda}\right)^2}.
\end{eqnarray*}
Consequently,
$\lim_{\ell \rightarrow +\infty}\, v_2^u(p_\ell)=\lambda,$
meaning that the unstable one-dimensional space tends to the direction spanned by $(1,\lambda)$, which is precisely the slope of the tangent to the graph of $\xi_u$ at the primary connection.

\section{Lyapunov exponents of $\mathcal{P}_\lambda$ at $p_\ell$}\label{ss:LE}

The Lyapunov exponents along the orbit of a fixed point of $\mathcal{P}_\lambda$ depend on the time needed for the trajectory to return to the cross section, whose main components are the sojourns spent near $\sigma_1$ and $\sigma_2$. Using the computations of Section \ref{s: eigenvalues}, we may now estimate them.
\medbreak

The Lyapunov exponents at a fixed point $p_\ell \in \mathcal{D}\cap \In^+(\sigma_1)$ of $\mathcal{P}_\lambda$ are approximately given by
$$\chi_s (p_\ell) \simeq \frac{\log |\mu_s(p_\ell)|}{t_1(p_\ell)}  \quad \text{and} \quad \chi_u(p_\ell) \simeq \frac{\log |\mu_u(p_\ell)|}{t_1(p_\ell)}$$
where $t_1(p_\ell)$ is the time of first return of $(p_\ell)$ to the cross section by the iteration of $\mathcal{P}_\lambda$. As $t_1(p_\ell) = -K \log y_\ell$ (see Subsection~\ref{sse:first-return-time}), $\delta > 1$ and $K > 0$ (cf. \eqref{delta e K}), we will use the calculations of the previous subsection to estimate these exponents as follows.
Firstly, using \eqref{Time1}, we get
\begin{eqnarray*}
\chi_s(p_\ell) + \chi_u(p_\ell) &\simeq& \frac{\log |\mu_s(p_\ell) \times \mu_u(p_\ell)|}{-K\log y_\ell} = \frac{\log \,(\delta\,y_\ell^{\delta-1})}{-K\log y_\ell}
= \frac{\log \delta}{-K\log y_\ell} + \frac{\delta-1}{-K}
\end{eqnarray*}
and so
\begin{equation}\label{conta1}
\lim_{\ell \rightarrow +\infty} \, \left(\chi_s(p_\ell) + \chi_u(p_\ell)\right) = - \frac{\delta-1}{K}.
\end{equation}
On the other hand, we have
\begin{eqnarray*}
\frac{\log |\mu_s(p_\ell) + \mu_u(p_\ell)|}{-K\log y_\ell} &=& \frac{\log \,\left(|\mu_u(p_\ell)|\left|1 + \frac{\mu_s(p_\ell)}{\mu_u(p_\ell)}\right|\right)}{-K\log y_\ell}=
 \frac{\log |\mu_u(p_\ell)|}{-K \log y_\ell} + \frac{\log \left|1 + \frac{\mu_s(p_\ell)}{\mu_u(p_\ell)}\right|}{-K\log y_\ell} \\
 &\simeq& \chi_u(p_\ell) + \frac{\log \left|1 + \frac{\mu_s(p_\ell)}{\mu_u(p_\ell)}\right|}{-K\log y_\ell}
\end{eqnarray*}
where
$$\lim_{\ell \rightarrow +\infty} \, \frac{\log \left|1 \,+\, \frac{\mu_s(p_\ell)}{\mu_u(p_\ell)}\right|}{-K\log y_\ell} = 0.$$
Moreover,
\begin{eqnarray*}
&&\lim_{\ell \rightarrow +\infty} \, \frac{\log |\mu_s(p_\ell) + \mu_u(p_\ell)|}{-K\log y_\ell} = \lim_{\ell \rightarrow +\infty} \, \frac{\log |\text{trace}\, D\mathcal{P}_\lambda(p_\ell)|}{-K\log y_\ell} = \\
&=&  \lim_{\ell \rightarrow +\infty} \, \frac{\log \,\left(-1-\delta y_\ell^{\delta-1} + \frac{\lambda K}{y_\ell}\cos(x_\ell-K\log y_\ell)\right)}{-K\log y_\ell}=\\
&=& \lim_{\ell \rightarrow +\infty} \, \frac{\log \,\left(-1-\delta y_\ell^{\delta-1} + \frac{\lambda K}{y_\ell}\sqrt{1-\left(\frac{y_\ell-y_\ell^\delta}{\lambda}\right)^2}\,\right)}{-K\log y_\ell}=\\
&=& \lim_{\ell \rightarrow +\infty} \, \frac{\log \left(\frac{-y_\ell-\delta y_\ell^{\delta} +  K\sqrt{\lambda^2-\left(y_\ell-y_\ell^\delta\right)^2}}{y_\ell}\right)}{-K\log y_\ell}=\\
&=& \lim_{\ell \rightarrow +\infty} \, \frac{-\log y_\ell}{-K\log y_\ell} + \lim_{\ell \rightarrow +\infty} \, \frac{\log \, \left(-y_\ell-\delta y_\ell^{\delta} +  K\sqrt{\lambda^2-(y_\ell-y_\ell^\delta)^2}\,\right)}{-K\log y_\ell}
=  \frac{1}{K}.
\end{eqnarray*}
Thus,
$$\lim_{\ell \rightarrow +\infty} \chi_u(p_\ell) =  \frac{1}{K}>0$$
and so, by \eqref{conta1},
$$\lim_{\ell \rightarrow +\infty} \chi_s(p_\ell) = - \frac{\delta}{K}<0.$$

\section{Proof of Theorem \ref{thm:C}}\label{Proof of Theorem C}

Let $V^0$ be as in Section~\ref{overview} and $\mathcal{B}_\lambda = \{P \in V^0: P\, \text{ is chain-accessible from } \,\sigma_2\}.$ We first observe that $\mathcal{B}_\lambda$ coincides with $\mathcal{C}_\lambda=\{P \in V^0: P\, \text{ is chain-accessible from } \,\sigma_1\}.$ This is a general property of any heteroclinic network linking two equilibria and a straightforward consequence of the following two facts. The equilibrium $\sigma_2$ is chain-accessible from $\sigma_1$ because, given $\varepsilon>0$ and $\tau>0$, any point of $[\sigma_1 \rightarrow \sigma_2] \setminus \{\sigma_1\}$ at a distance from $\sigma_1$ smaller than $\varepsilon$ is an $(\varepsilon, \tau)$--trajectory connecting $\sigma_1$ with $\sigma_2$. Thus, $\mathcal{B}_\lambda \subset \mathcal{C}_\lambda$. Conversely, $\sigma_1$ is chain-accessible from $\sigma_2$ due to the fact that, given $\varepsilon>0$ and $\tau>0$, any point $P_1 \neq \sigma_2$ of a $0$--pulse at a distance from $\sigma_2$ smaller than $\varepsilon$ is an $(\varepsilon, \tau)$--trajectory connecting $\sigma_2$ with $\sigma_1$. So $\mathcal{C}_\lambda \subset \mathcal{B}_\lambda$.

We proceed showing that $\mathcal{B}_\lambda$ is forward invariant by the flow, closed and chain-stable.

\subsection{Forward invariant} Given $\lambda \geqslant 0$, $P \in \mathcal{B}_\lambda$ and $t \geqslant 0$, if $\{P_1,\cdots,P_k\}$ is an $(\varepsilon,\tau)$--trajectory connecting $\sigma_2$ to $P$, then $\{P_1,\cdots,P_k, P\}$ is an $(\varepsilon,\min\,\{\tau,t\})$--trajectory connecting $\sigma_2$ to $\varphi_\lambda(t,P)$. Thus, $\varphi_\lambda(t,\mathcal{B}_\lambda) \subset \mathcal{B}_\lambda$ for every $t \geqslant 0$; that is, $\mathcal{B}_\lambda$ is forward invariant.

\subsection{Closed} We now verify that accessibility from $\sigma_2$ (and more generally from any point) is a closed property. Let $Q$ be a point in $\overline{\mathcal{B}_\lambda}$ and $\varepsilon>0$ arbitrarily small. Take the open ball $B_{\frac{\varepsilon}{2}}(Q)$ centered at $Q$ with radius $\frac{\varepsilon}{2}$ and $P \in \mathcal{B}_\lambda \cap B_{\frac{\varepsilon}{2}}(Q)$. By definition of $\mathcal{B}_\lambda$, there are $\tau> 0$ and an $(\frac{\varepsilon}{2},\tau)$--trajectory $\{P_1,\cdots,P_k\}$ connecting $\sigma_2$ to $P$. Therefore, the distance between $P_k$ and $Q$ is smaller than $\varepsilon$, and so $\{P_1,\cdots,P_k\}$ is an $(\frac{\varepsilon}{2},\tau)$--trajectory connecting $\sigma_2$ to $Q$ as well. Thus $Q \in \mathcal{B}_\lambda$ and $\mathcal{B}_\lambda$ is closed. Notice also that, if $P \in \mathcal{B}_\lambda$ and $\{P_1,\cdots,P_k\}$ is an  $(\varepsilon,\tau)$--trajectory connecting $\sigma_2$ to $P$, then any $P_j$ is, by definition, chain-accessible from $\sigma_2$. Thus, $P_j \in \mathcal{B}_\lambda$ for every $1 \leqslant j \leqslant k$.

\subsection{Chain-stable} Let $(\varepsilon_j)_{j \in \NN}$ be a decreasing sequence of positive real numbers converging to $0$
and define $\mathcal{U}_j$ as the set of points $P \in V^0$ that are chain-accessible from $\sigma_2$ by an $(\varepsilon_j,\tau)$--trajectory for some $\tau > 0$. Notice that each $\mathcal{U}_j$ is open. Indeed, given $P \in \mathcal{U}_j$, let $\{P_1,\cdots,P_k\}$ an $(\varepsilon_j,\tau)$--trajectory connecting $\sigma_2$ to $P$. Then the open ball $B_\rho(P)$ centered at $P$ with radius
$\rho = \frac{\varepsilon_j-\text{distance}\,(P,P_k)}{2}>0$ is contained in $\mathcal{U}_j$ since, for every $Q \in B_\rho(P)$, we have
\begin{eqnarray*}
\text{distance}\,(Q, P_k) &\leqslant& \text{distance}\,(Q, P) + \text{distance}\,(P,P_k) <  \rho + \text{distance}\,(P,P_k)\\
&<& \frac{\varepsilon_j+ \varepsilon_j}{2}= \varepsilon_j.
\end{eqnarray*}
Therefore, $\{P_1,\cdots,P_k\}$ is an $(\varepsilon_j,\tau)$--trajectory also connecting $\sigma_2$ to $Q$.
Moreover, every $(\varepsilon_j,\tau)$--trajectory connecting two points of $\mathcal{B}_\lambda$ is contained in $\mathcal{U}_j$. Hence $\mathcal{B}_\lambda=\bigcap_j\,\mathcal{U}_j$, and $(\mathcal{U}_j)_{j \in \NN}$ is a family of nested neighborhoods of $\mathcal{B}_\lambda$. Now, given an open neighborhood $\mathcal{V}$ of $\mathcal{B}_\lambda$, there exists $j \in \NN$ such that $\mathcal{U}_j \subset \mathcal{V}$. So every $(\varepsilon_j,\tau)$--trajectory connecting two points of $\mathcal{B}_\lambda$ is contained in $\mathcal{V}$. Thus $\mathcal{B}_\lambda$ is chain-stable.

\subsection{Relevant subsets of $\mathcal{B}_\lambda$} We are left to show that $\overline{W^u_\lambda (\sigma_2)\,\cup\,W^s_\lambda (\sigma_1)} \subset \mathcal{A}_\lambda \cap  \mathcal{B}_\lambda.$
We start recalling that both manifolds $W^s_\lambda(\sigma_1)$ and $W^u_\lambda(\sigma_2)$ are contained in  $\mathcal{A}_\lambda$ (see \eqref{union_final}). It is also immediate to verify that the manifold $W^u_\lambda(\sigma_2)$, and so its closure, is contained in $\mathcal{B}_\lambda$. Indeed, take $P \in W^u_\lambda(\sigma_2)$, $\varepsilon >0$ and $t > 0$ such that $\varphi_\lambda(-t,P)$ is at a distance smaller than $\varepsilon$ from $\sigma_2$. Then the $(\varepsilon, t)$--trajectory $\{\varphi_\lambda(-t,P)\}$ connects $\sigma_2$ to $P$; thus, $P$ belongs to $\mathcal{B}_\lambda$.  Observe now that $W^s(\sigma_1) \subset \mathcal{C}_\lambda=\mathcal{B}_\lambda$, hence $\overline{W^s_\lambda(\sigma_1)} \subset \mathcal{B}_\lambda$.

\subsection{Chain-recurrence}
For $\lambda=0$ the attracting set $\mathcal{A}_0$ is a chain-recurrent class and coincides with $\mathcal{B}_0$. When $\lambda > 0$, by \cite[Lemma 3.2 and Remark 4.6]{HSZhao01} or \cite[Theorem A]{BF85}, the set $\mathcal{B}_\lambda$ is a chain-recurrent class unless it contains proper attractors.
We observe that, if $\mathcal{A}_\lambda$ and $\mathcal{B}_\lambda$ are chain-recurrent classes, one has $\mathcal{A}_\lambda=\mathcal{B}_\lambda$. Indeed, on the one hand, $\mathcal{B}_\lambda \subset \mathcal{A}_\lambda$ since we are assuming that both $\mathcal{A}_\lambda$ and $\mathcal{B}_\lambda$ are chain-recurrent classes, $\mathcal{A}_\lambda$ is an attracting set and $\mathcal{A}_\lambda \cap \mathcal{B}_\lambda \neq \emptyset$. On the other hand, as $\sigma_2$ belongs to $\mathcal{A}_\lambda$ and this set is a chain-recurrent class, any of its points is chain-accessible from $\sigma_2$; therefore $\mathcal{A}_\lambda \subset \mathcal{B}_\lambda$.

However, we do not know whether $\mathcal{A}_\lambda = \mathcal{B}_\lambda$ when $\lambda > 0$, and it seems unlikely to find $\lambda$ whose $\mathcal{A}_\lambda$ does not contain proper attractors (though we do know that there are many parameters for which the flow exhibits proper attractors (cf. items (4)--(6) of Section~\ref{overview}). Yet, it is easy to conclude that the subset of $\mathcal{A}_\lambda$ whose elements are accumulation points of trajectories starting at initial conditions in $\mathcal{B}_\lambda$ is contained in $\mathcal{B}_\lambda$. Indeed, consider $\varepsilon > 0$ and $P \in \mathcal{A}_\lambda$ for which there exists $Q \in \mathcal{B}_\lambda$ whose $\omega$-limit contains $P$. Take $\tau > 0$ and an $(\varepsilon, \tau)$-trajectory $\{P_1, \cdots, P_k\}$ connecting $\sigma_2$ to $Q$. As $P$ is in the $\omega$-limit of $Q$, there is $t > \tau$ such that the distance between $P$ and $\varphi_\lambda(t,Q)$ is smaller than $\varepsilon$. Therefore, the set $\{P_1, \cdots,P_k, Q\}$ is an $(\varepsilon, \tau)$-trajectory connecting $\sigma_2$ to $P$. So $P$ belongs to $\mathcal{B}_\lambda$.

\subsection{Final remark}
For $\lambda=0$, the manifold $W^s_0(\sigma_1) \equiv W^u_0(\sigma_2)$ is chain-accessible from any point of $V^0$. In fact, as $\mathcal{A}_0= [\sigma_1 \rightarrow \sigma_2] \cup W^s_0(\sigma_1)$, given $\varepsilon > 0$, $\tau > 0$ and $P \in V^0$, we may find $R \in V^0 \setminus  [\sigma_1 \rightarrow \sigma_2]$ at a distance from $P$ smaller than $\varepsilon$ and whose $\omega$--limit by the flow of $f_0$ contains $W^s_0(\sigma_1)$. Thus, there is $t > \tau$ such that the distance from $\varphi_\lambda(t,R)$ to $W^s_0(\sigma_1)$ is smaller than $\varepsilon$ (that is, we may find $Q \in W^s_0(\sigma_1)$ such that the distance from $\varphi_\lambda(t,R)$ to $Q$ is smaller than $\varepsilon$). Therefore, $W^s_0(\sigma_1)$ is $(\varepsilon,\tau)$--chain-accessible from $P$ through the $(\varepsilon,\tau)$--trajectory $\{R\}$ that connects $P$ to $Q$. As $\varepsilon$ is arbitrary, this proves that $W^s_0(\sigma_1)$ is chain-accessible from any $P \in V^0$.

For $\lambda > 0$, this property may not be valid due to the presence of sinks or other proper attractors in $\mathcal{A}_\lambda$. Meanwhile, we claim that:
\medskip

\noindent \textbf{Claim}: \emph{If either $W^s_\lambda(\sigma_1)$ or $\sigma_2$ is chain-accessible from a point in $V^0$, then so is every element of $\mathcal{B}_\lambda$.}
\medskip

\begin{proof} Consider $X \in V^0$, $P \in \mathcal{B}_\lambda$ and assume that $W^s_\lambda(\sigma_1)$ is chain-accessible from $X$ (the argument is similar if we replace $W^s_\lambda(\sigma_1)$ by $\sigma_2$). Given $\varepsilon>0$ arbitrarily small, there exist $S_1 \in W^s_\lambda(\sigma_1)$, $\tau>0$ and an $(\frac{\varepsilon}{2},\tau)$--trajectory $\{X_1,\cdots,X_m\}$ connecting $X$ to $S_1$. Iterate now $S_1$ a time $t_1 > \tau$ so that the distance from $\varphi_\lambda(t_1,S_1)$ to $\sigma_1$ is smaller than $\frac{\varepsilon}{2}$. Thus, the set $\{X_1,\cdots,X_m, S_1\}$ is an $(\frac{\varepsilon}{2},\tau)$--trajectory connecting $X$ to $\sigma_1$. Afterwards, take a point $S_2 \in [\sigma_1 \rightarrow \sigma_2]$ at a distance smaller than $\frac{\varepsilon}{2}$ from $\sigma_1$; notice that the points $\{X_1,\cdots,X_m, S_1\}$ also form an $(\varepsilon,\tau)$--trajectory connecting $X$ to $S_2$. If we now iterate $S_2$ a time $t_2 > \tau$, we reach a point $\varphi_\lambda(t_2,S_2)$ which is at a distance smaller than $\frac{\varepsilon}{2}$ from $\sigma_2$. Moreover, as $P \in \mathcal{B}_\lambda$, we may find an $(\frac{\varepsilon}{2},\tau)$--trajectory $\{P_1,\cdots,P_k\}$ connecting $\sigma_2$ to $P$. Therefore, the points $\{X_1,\cdots,X_m, S_2, P_1,\cdots,P_k\}$ produce an $(\varepsilon,\tau)$--trajectory connecting $X$ to $P$. This proves that $\mathcal{B}_\lambda$ is chain-accessible from $X$.
\end{proof}

\appendix
\section{Glossary}\label{Definitions}


\subsection{Hyperbolicity}\label{hyperbolic}
Let $M$ be a smooth compact Riemannian manifold, and consider a diffeomorphism $h: M\rightarrow M$ and a compact $h$-invariant set $\mathcal{K} \subset \EU^3$. We say that $\mathcal{K}$ is \emph{uniformly hyperbolic} if there are constants $C>0$ and $0 < \mu < 1$ such that, for every $x \in \mathcal{K}$, there is a splitting of the tangent space $T_x M=E^s_x\oplus E^u_x$ satisfying, for every $n\in \NN$,
$$\|Dh^n(v)\|\leqslant C \mu^n \|v\|\quad \forall\, v\in E^s_x \qquad \text{and} \qquad \|Dh^{-n}(v)\|\leqslant C \mu^n \|v\| \quad \forall \, v\in E^u_x.$$


\subsection{Symmetry}\label{symmetry}
Given a group $\mathcal{G}$ of endomorphisms of $\EU^3$, we say that a one-parameter family of vector fields $(f_\lambda)_{\lambda \,\in \,\RR}$ is symmetric it it satisfies the equivariance assumption
$$f_\lambda(\gamma x)=\gamma f_\lambda(x) \quad \quad \forall \,x \in \EU^3, \quad \forall \,\gamma \in \mathcal{G}\, \text{ and }\, \forall \,\lambda \in \RR.$$

\subsection{Attracting set}\label{attractor}
A subset $A$ of a topological space $\mathcal{M}$ is said to be \emph{attracting} by a flow $\varphi$ if there exists an open set $U \subset \mathcal{M}$ satisfying
$$\varphi(t,U)\subset U \quad \quad \forall \,t \geqslant 0 \, \text{ and }\, \bigcap_{t\,\in\,\RR^+}\,\varphi(t,U)=A.$$
Its basin of attraction, denoted by $\mathcal{B}(A)$, is the set of points in $\mathcal{M}$ whose orbits have $\omega-$limit in $A$. We say that $A$ is \emph{asymptotically stable} (or, equivalently, that $A$ is a \emph{global attracting set}) if $\mathcal{B}(A) = \mathcal{M}$. An attracting set is said to be \emph{quasi-stochastic} if it encloses periodic trajectories with different Morse indices (the \emph{Morse index} of a hyperbolic equilibrium is the dimension of its unstable manifold), structurally unstable cycles, sinks and saddle-type invariant sets.

\subsection{Heteroclinic phenomena}\label{het_phenomena}
Suppose that $\sigma_1$ and $\sigma_2$ are two hyperbolic saddle-foci of a vector field $f$ with different Morse indices. We say that there is a {\em heteroclinic cycle} associated to $\sigma_1$ and $\sigma_2$ if $W^u(\sigma_1)\cap W^s(\sigma_2)\neq \emptyset$ and  $W^u(\sigma_2)\cap W^s(\sigma_1)\neq \emptyset.$ For $i, j \in \{1,2\}$, the non-empty intersection of $W^u(\sigma_i)$ with $W^s(\sigma_j)$ is called a \emph{heteroclinic connection} between $\sigma_i$ and $\sigma_j$, and will be denoted by $[\sigma_i \rightarrow \sigma_j]$. Although heteroclinic cycles involving equilibria are not a generic feature within differential equations, they may be structurally stable within families of vector fields which are equivariant under the action of a compact Lie group $\mathcal{G}\subset \textbf{O}(n)$, due to the existence of flow-invariant subspaces \cite{GH}. A \emph{heteroclinic network} is a connected finite union of heteroclinic cycles.

\subsection{Bykov cycle}\label{Bykov_def}
A heteroclinic cycle between two hyperbolic saddle-foci of a vector field $f$ with different Morse indices, where one of the connections is transverse (and so stable under small perturbations) while the other is structurally unstable, is called a \emph{Bykov cycle}. {A \emph{Bykov network} is a connected union} of heteroclinic cycles, not necessarily in finite number.


\subsection{Tubular neighborhood of a Bykov cycle}\label{tubular_def}
Given a Bykov cycle associated to $\sigma_1$ and $\sigma_2$, let $V_1, V_2$ be two small and disjoint cylindrical neighborhoods of $\sigma_1$ and $\sigma_2$, respectively. Consider two local sections transverse to the cycle at two points $p_1$ and $p_2$ in the connections $[\sigma_1\rightarrow\sigma_2]$ and $[\sigma_2\rightarrow\sigma_1]$, respectively, with $p_1, p_2 \not\in V_1\cup V_2$. Saturating the cross sections by the flow, we obtain two flow-invariant tubes joining $V_1$ and $V_2$ which contain the connections in their interior. The union of those tubes with $V_1$ and $V_2$ is called a \emph{tubular neighborhood} of the Bykov cycle.

\subsection{Chirality}\label{chirality_def}
There are two different possibilities for the geometry of a flow around a Bykov cycle $\Gamma$, depending on the direction the trajectories turn around the one-dimensional heteroclinic connection between $\sigma_1$ and $\sigma_2$. Let $V_1$ and $V_2$ be small disjoint neighborhoods of $\sigma_1$ and $\sigma_2$ with disjoint boundaries $\partial V_1$ and $\partial V_2$, respectively. Trajectories starting at $ \In(\sigma_1) \setminus W^s(\sigma_1)$ enter $V_1$ in positive time, then follow the connection from $\sigma_1$ to $\sigma_2$, enter $V_2$, finally leaving $V_2$ at $\partial V_2$. Let $\gamma$ be a piece of trajectory like the one just described from $\In(\sigma_1) \setminus W^s(\sigma_1)$ to $\partial V_2$. Now join its starting point to its end point by a line segment (see Figure 1 of \cite{LR2014}), forming a closed curve that we call the  \emph{loop of $\gamma$}. The Bykov cycle $\Gamma$ and the loop of $\gamma$ are disjoint closed sets. We say that the two saddle-foci $\sigma_1$ and $\sigma_2$ in $\Gamma$ have the same \emph{chirality} if the loop of every such a trajectory is linked to $\Gamma$, in the sense that these two sets cannot be disconnected by an isotopy in $\RR^{4}$.

\subsection{Pulse}\label{pulse}
Let $V_1, V_2$ be two small and disjoint neighborhoods of $\sigma_1$ and $\sigma_2$, respectively, and take $n \in \NN$. A one-dimensional heteroclinic connection from $\sigma_2$ to $\sigma_1$ which, after leaving $V_2$, enters and leaves both $V_1$ and $V_2$ precisely $n$ times is called an \emph{$n$--pulse heteroclinic connection} with respect to $V_1$ and $V_2$, or simply an \emph{$n$--pulse}. A $0$--pulse is a one-dimensional heteroclinic connection from $\sigma_2$ to $\sigma_1$ which, after leaving $V_2$, enters $V_1$ and afterwards stays in this neighborhood.

\subsection{Saturated set and the operator tilde}\label{operador_tilde}
Let $\mathcal{S}$ be a cross-section to a flow $\varphi$ and assume that $\mathcal{S}$ contains a compact set $\mathcal{K}$ invariant by the first return map $\mathcal{R}$ to $\mathcal{S}$. Then the \emph{saturation of $\mathcal{K}$}, we denote by $\widetilde{\mathcal{K}}$, is the flow-invariant set formed by the $\varphi$-trajectories of points of $\mathcal{K}$, that is,  $\Big\{\varphi(t,x)\,:\,t\,\in\,\RR,\,\,x\,\in\, \mathcal{K}\Big\}.$

\subsection{Chain-accessible point}\label{accessibility}
Given $\varepsilon>0$ and $\tau > 0$, an $(\varepsilon,\tau)$-\emph{trajectory} of a flow $\varphi$ is a finite set $\{P_1,\cdots,P_k\}$ such that, for all $i\in \{1, \ldots, k-1 \}$, the point $P_{i+1}$ is at a distance  strictly smaller than $\varepsilon$ from $\varphi(t, P_i)$ for some $t > \tau$. A point $Q$ is said to be $(\varepsilon,\tau)$-\emph{chain-accessible} from a point $P$ if there exists an $(\varepsilon,\tau)$-trajectory $\{P_1, P_2,\cdots,P_k\}$ \emph{connecting} $P$ with $Q$; in particular, $P$ and $Q$ are at a distance strictly smaller than $\varepsilon$ from $P_1$ and $P_k$, respectively. A point $Q$ is said to be \emph{chain-accessible} from a point $P$ if, for any $\varepsilon > 0$, there exists $\tau > 0$ such that $Q$ is $(\varepsilon,\tau)$--chain-accessible from $P$.

\subsection{Chain-recurrent class}\label{chain-recurrence}
A point $P$ is called \emph{chain-recurrent} for a flow $\varphi$ if it is chain-accessible from $\varphi(t,P)$ for any $t \in \mathbb{R}$. More generally, a set $\mathcal{C}$ is \emph{chain-accessible} from a point $P$ if it contains a point chain-accessible from $P$. A compact invariant set $\mathcal{C}$ is a \emph{chain-recurrent class} if, given any two points in $\mathcal{C}$ and $\varepsilon >0$, we may find an $(\varepsilon,\tau)$-trajectory connecting them entirely contained in $\mathcal{C}$. We observe that, by definition, every point of a chain-recurrent class is chain-recurrent. A set $\mathcal{C}$ is called \emph{chain-stable} if, for any neighborhood $\mathcal{V}$ of $\mathcal{C}$, there exists an $\varepsilon>0$ such that every $(\varepsilon, \tau)$-trajectory connecting any two points of $\mathcal{C}$ is contained in $\mathcal{V}$.

\end{document}